\newtheorem{maintheorem}{Theorem}
\newtheorem{maincoro}[maintheorem]{Corollary}
\newtheorem{theorem}{Theorem}[section]
\newtheorem{lemma}[theorem]{Lemma}
\newtheorem{claim}[theorem]{Claim}
\newtheorem{corollary}[theorem]{Corollary}
\newenvironment{proof}{\noindent{\bf Proof.\,\ }}{\hfill\mbox{$\Box$}\smallskip}
\def\XXint#1#2#3{{\setbox0=\hbox{$#1{#2#3}{\int}$ }
\vcenter{\hbox{$#2#3$ }}\kern-.6\wd0}}
\def\P{\mathbb{P}}
\def\Z{\mathbb{Z}}
\def\E{\mathbb{E}}
\newcommand{\cD}{\mathcal{D}}
\newcommand{\cF}{\mathcal{F}}
\newcommand{\fR}{\mathfrak{R}}
\newcommand{\sA}{\mathscr{A}}
\newcommand{\sB}{\mathscr{B}}
\newcommand{\sD}{\mathscr{D}}
\newcommand{\sH}{\mathscr{H}}
\newcommand{\sR}{\mathscr{R}}
\newcommand{\sW}{\mathscr{W}}
\begin{document}

\title{On One-dimensional
Multi-Particle Diffusion Limited Aggregation}

\author{
Allan Sly
\thanks{Princeton University and University of California, Berkeley. Supported  by NSF grant DMS-1352013. Email:asly@math.princeton.edu}
}
\date{}

\maketitle
\begin{abstract}
We prove that the one dimensional Multi-Particle Diffusion Limited Aggregation model has linear growth whenever the particle density exceeds 1 answering a question of Kesten and Sidoravicius.  As a corollary we prove linear growth in all dimensions~$d$ when the particle density is at least 1.
\end{abstract}

\section{Introduction}
In the Diffusion Limited Aggregation (DLA) model introduced by Witten and Sanders~\cite{WitSan:81} particles arrive from infinity and adhere to a growing aggregate.  It produces beautiful fractal-like pictures of dendritic growth but mathematically it remains poorly understood.  We consider a variant, multiparticle DLA, where the aggregate sits in an infinite Poisson cloud of particles which adhere when they hit the aggregate, a model which has been studied in both physics~\cite{Voss:84} and mathematics~\cite{KesSid:08,SidSta:16}.  Again one is interested in the growth of the aggregate and its structure.

In the model, initially there is a collection of particles whose locations are given by a mean $K$ Poisson initial density on $\Z^d$.  The particles each move independently according to rate 1 continuous time random walks on $\Z^d$.  We follow the random evolution of an aggregate $\cD_t \subset \Z^d$ where at time 0 an aggregate is placed at the origin $\cD_0=\{0\}$ to which other particles adhere according the the following rule.  When a particle at $v\not\in \cD_{t-}$ attempts to move onto the aggregate $\cD_t$ at time $t$, it stays in place and instead is added to the aggregate so $\cD_t=\cD_{t-} \cup \{v\}$ and the particle no longer moves.  Any other particles at $v$ at the time are also frozen in place.

We will mainly focus on the one dimension setting and in Section~\ref{s:Higher} will discuss how to boost the results to higher dimensions.  In this case the aggregate is simply a line segment and the processes on the positive and negative axes are independent so we simply restrict our attention to the rightmost position of the aggregate at time $t$ which we denote $X_t$.  In this case at time $t$ when a particle at $X_{t-} +1$ attempts to take a step to the left it is incorporated into the aggregate along with any other particles.

It was proved by Kesten and Sidoravicius~\cite{KesSid:08} that $X_t$ grows like $\sqrt{t}$ when $K<1$.  Indeed there simply are not enough particles around for it to grow faster.  They conjectured, however, that when $K>1$ then it should grow linearly.  Our main result confirms this conjecture.

\begin{maintheorem}\label{t:mainThm}
For all $K>1$ the limit $\lim_t \frac1{t} X_t$ exists almost surely and is a positive constant.
\end{maintheorem}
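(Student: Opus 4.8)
\medskip
\noindent\textbf{Proof proposal.}
For $x\in\Z_{\ge0}$ write $T_x=\inf\{t\ge0: X_t\ge x\}$ for the first time the aggregate reaches site $x$; since $X_t$ is nondecreasing with $X_t\to\infty$ almost surely, the theorem is equivalent to the statement that $T_x/x\to 1/v$ almost surely for some constant $v\in(0,\infty)$, which I would establish in three parts. \emph{Existence.} When the front first reaches $x$ at time $T_x$, every particle still in play lies in $(x,\infty)$ and has never touched the aggregate, so the continued evolution of the front is a copy of the original process driven by this ``sub-environment''. This yields an approximate subadditivity $T_{x+y}\le T_x+\widehat T_{x,y}$, with $\widehat T_{x,y}$ the further advance time, stochastically comparable to a fresh copy of $T_y$; granting $\E T_x<\infty$ (established in the positivity step below), Liggett's subadditive ergodic theorem applied to a regularized version of $(T_x)$ gives almost sure convergence of $T_x/x$ to $\gamma:=\lim_x\E T_x/x$.

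\emph{Finiteness of $v$, that is $\gamma>0$.} This is the easier direction. The front advances from $j-1$ to $j$ only when a particle then at site $j$ steps onto the aggregate, an event of total rate (number of particles at the front-adjacent site)$/2$; comparing occupation numbers near the front with those of the free non-absorbing particle system, for which occupation numbers in any fixed window are with high probability bounded, the number of such advancing events occurring in $[0,t]$ is $O(t)$ with overwhelming probability, so $X_t\le Ct$ eventually almost surely. Hence $T_x\ge x/C$ and $\gamma\ge 1/C>0$.

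\emph{Positivity of $v$, that is $\gamma<\infty$, equivalently $\E T_x\le Cx$.} This is where the hypothesis $K>1$ enters, through a surplus of particles: a block of $L$ consecutive sites carries $\mathrm{Poisson}(KL)$ particles, so with probability $1-e^{-cL}$ it contains at least $(1+\delta)L$ of them once $K\ge 1+\delta$ --- a genuine excess over the $L$ advances the front must make to cross the block. I would run a multi-scale renormalization: call a block ``good'' if it has this excess and, moreover, the front, entering its left edge in an arbitrary admissible state and fed only by the particles the block itself contains, exits its right edge within time $CL$; prove inductively on the scale that good blocks occur with probability at least $1-e^{-cL^{\alpha}}$, the surplus at each scale being the resource that prevents the front from stalling inside the block; and finally concatenate a linear chain of good blocks along $[0,x]$, using Borel--Cantelli to absorb the lower-order contribution of the bad blocks, to conclude $\E T_x\le Cx$. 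Together the three parts give $X_t/t\to v:=1/\gamma\in(0,\infty)$ almost surely.

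\emph{The main obstacle}, concentrated in the positivity step, is that the model is \emph{not} monotone in the particle configuration strongly enough to justify the naive ``restart from a fresh Poisson cloud'' argument: by the time the front reaches $x$ it has consumed the particles immediately ahead of it, leaving a depletion zone of random width between the front and the bulk, and it is precisely the growth of this zone that, when $K<1$, slows the front to $\sqrt t$ growth, as established by Kesten--Sidoravicius~\cite{KesSid:08}. The technical heart is therefore to show that for $K>1$ the depletion zone stays thin --- that the excess mass carried by each good block refills the zone at least as fast as the front empties it --- and to propagate this control across all scales using only a weak comparison between the perturbed sub-environment ahead of the front and a fresh one. I expect essentially all of the difficulty to lie in this multi-scale control of the depletion zone, rather than in the subadditivity or the upper bound.
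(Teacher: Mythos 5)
There is a genuine gap, and it sits exactly where you predict it will: you have correctly isolated the depletion zone as the heart of the matter, but your proposal does not contain the mechanism that controls it, and the surplus-counting renormalization you sketch is not that mechanism. Knowing that a block of $L$ sites carries $(1+\delta)L$ particles says nothing about the \emph{rate} at which particles reach the front: the conditional intensity of particles at the site $X_t+1$ is $K$ times the probability that a random walk, run backwards along the front's history, never entered the aggregate, and this can be arbitrarily small if the front has been slow — regardless of how many particles the block originally contained. Your inductive good-block statement, quantified over ``an arbitrary admissible entering state'' and ``fed only by the particles the block itself contains'', is therefore either false or unprovable as stated: a front entering after a long stall faces a wide depletion zone, and the block's own particles near its left edge are exactly the ones most likely to have been consumed or conditioned away. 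The paper's actual engine is quantitative and quite different: if the history profile $Y_t$ dominates a ramp of slope $\alpha$, then the probability that a walk stays below that ramp is asymptotically $2\alpha$ (a reflection/Rayleigh computation, Lemma~\ref{l:RWYalpha}), so the advance rate is at least about $\tfrac{K}{2}\cdot 2\alpha=K\alpha>\alpha$; iterating this bootstrap (Lemmas~\ref{l:maintainYalpha}--\ref{l:permissive2}) amplifies any tiny linear speed up to a fixed constant, and this amplification factor $K$ is precisely where $K>1$ enters. Nothing in your proposal produces this rate gain, and without it the induction across scales has no source of improvement.

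The existence step is also not sound as written. Liggett's subadditive ergodic theorem needs the increments $T_{x,x+y}$ to have the law of $T_{0,y}$ (plus stationarity), and here they do not: the environment seen at time $T_x$ is depleted in a history-dependent way, so $\widehat T_{x,y}$ is stochastically \emph{larger} than a fresh copy of $T_y$, which is the wrong direction to repair the hypotheses, and ``a regularized version'' is not specified. The paper instead gets almost sure convergence by a two-step route: monotonicity ($Y_t$ is stochastically decreasing, so it converges in law to $Y^*$ and $\E S(t)$ converges to $\alpha^*>0$) gives convergence of $X_t/t$ to $\alpha^*$ in probability (Theorem~\ref{t:convProb}), and then genuine regeneration times — integer times at which the front profile dominates $y_\alpha$ \emph{and} every surviving particle's past trajectory stays above $X_t-y_\alpha(\cdot)$, shown to have positive density in Lemma~\ref{l:regen} — allow a renewal-reward law of large numbers to upgrade this to almost sure convergence. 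Some regeneration structure of this kind (or another substitute for the missing distributional identity) is what your subadditivity paragraph would need; as it stands, both the existence and the positivity steps are missing their key ingredient.
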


We also give a simple extension of these results to higher dimensions and prove the following corollary.

\begin{maincoro}\label{c:Higher}
In all dimensions $d\geq 2$ when $K > 1$ the diameter of the aggregate  grows linearly in $t$, that is for some positive constant $\delta>0$
\[
\lim_t \frac1{t} \hbox{Diam}(\cD_t) > \delta \ \hbox{a.s.}
\]
\end{maincoro}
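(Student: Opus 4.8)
The idea is to embed a one-dimensional MDLA process inside the $d$-dimensional one by projecting onto a coordinate axis and coupling with a suitable one-dimensional instance that grows linearly by Theorem~\ref{t:mainThm}. Fix the first coordinate direction $e_1$. Starting from the aggregate $\{0\}$, I want to show that $\cD_t$ contains a point whose $e_1$-coordinate is at least $\delta t$ for all large $t$. The natural approach is to consider the ``column'' $L = \{k e_1 : k \ge 0\}$ and argue that the restriction of the dynamics to how far the aggregate progresses along $L$ dominates a one-dimensional MDLA with the same density $K$. The subtlety is that in $d \ge 2$ a particle sitting at the tip of the column can diffuse away in the transverse directions before attaching, so one does not get an exact one-dimensional process; instead one needs a stochastic domination.

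The key steps, in order: (i) Set up a coupling in which we only ever use particles whose initial position lies on the line $L$, and we only allow moves in the $\pm e_1$ direction for the purpose of the coupled one-dimensional process — i.e.\ run the true $d$-dimensional dynamics, but also run an auxiliary one-dimensional MDLA $X^{(1)}_t$ driven by the same Poisson field restricted to $L$ with its $e_1$-component of the walk. (ii) Show by an inductive/monotonicity argument that whenever the auxiliary one-dimensional aggregate occupies $\{0,1,\dots,m\}$ along $L$, the true aggregate $\cD_t$ contains all of $\{0,e_1,\dots,m e_1\}$: each time the one-dimensional process absorbs a particle at the tip, the corresponding $d$-dimensional particle is at that same site and its attempted $-e_1$ step onto the aggregate causes it to be absorbed in the true process as well, since extra transverse freedom only gives the particle \emph{more} ways to reach the aggregate, never fewer. (One must check the freezing rule and the independence of particles is respected by the coupling; this is where a careful but routine argument is needed.) (iii) Conclude that $\max\{k : k e_1 \in \cD_t\} \ge X^{(1)}_t$, hence $\operatorname{Diam}(\cD_t) \ge X^{(1)}_t$, and apply Theorem~\ref{t:mainThm} to $X^{(1)}_t$ with the same $K > 1$ to get $\liminf_t t^{-1}\operatorname{Diam}(\cD_t) \ge \lim_t t^{-1} X^{(1)}_t =: \delta > 0$ a.s.

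The main obstacle is step (ii): making the coupling rigorous. The honest difficulty is that in the true $d$-dimensional process particles off the line $L$ also contribute to $\cD_t$ and can reach the column from the side, which only helps; but one must be careful that using a particle ``for the one-dimensional process'' does not conflict with its behavior in the true process — the cleanest fix is to not literally couple particle trajectories but rather to couple the \emph{attachment events}, showing that the set of times and locations at which the one-dimensional tip advances is a subset of the times/locations at which the $d$-dimensional aggregate acquires the corresponding column point. Alternatively, and perhaps more simply, one can restart the one-dimensional process: at a doubling sequence of times, condition on the current aggregate, find within it a point far out along some axis, and note that the fresh Poisson particles beyond it again dominate a one-dimensional MDLA, using the strong Markov property; then Theorem~\ref{t:mainThm} applied along a subsequence plus a Borel–Cantelli argument gives the linear lower bound. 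Either way the quantitative input is entirely Theorem~\ref{t:mainThm}; the rest is a domination bookkeeping argument that I expect to occupy a short Section~\ref{s:Higher}.
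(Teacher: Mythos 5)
Your step (iii) reduction is the right target, but step (ii) — the heart of the argument — does not work as stated. If the auxiliary one-dimensional process is driven by the $e_1$-components of the true walks of particles started on the axis $L$, then at the moment the auxiliary process absorbs a particle at its tip, the actual particle in $\Z^d$ is \emph{not} at the corresponding axis site: its transverse coordinates have been performing their own random walk and are typically of order $\sqrt{t}$ away from $L$. Its $-e_1$ step therefore lands on a site $(m,y)$ with $y\neq 0$ which need not belong to $\cD_t$, so no attachment occurs in the true process, and the claimed inclusion $\{0,e_1,\dots,me_1\}\subseteq \cD_t$ is simply false (the $d$-dimensional aggregate does not contain long axis segments). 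The heuristic you lean on — ``extra transverse freedom only gives the particle more ways to reach the aggregate, never fewer'' — is not a valid monotonicity for MDLA in $d\geq 2$: enlarging the aggregate (or letting particles attach earlier/elsewhere) does not yield set inclusion of future aggregates, because a particle frozen early at one site is no longer available to build the aggregate where the comparison process needs it. A related problem undermines your fallback ``restart'' argument: after conditioning on the aggregate at a large time, the particle field just beyond the front is not a fresh density-$K$ Poisson field — it is thinned by the requirement of having avoided the aggregate's past, which is exactly the quantity the paper's $Y_t$-analysis is designed to control. There is also a minor normalization point: the $e_1$-projection of a rate-one walk in $\Z^d$ is a rate-$1/d$ walk, so at best you would compare with a time-slowed one-dimensional process (harmless for linear growth, but it should be said).

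The paper's route avoids the coupling altogether. It tracks $X_t$, the first coordinate of the rightmost aggregate point, and restricts attention to \emph{conforming} particles, namely those whose first coordinate has stayed strictly above the running front $X_s$ for all $s\leq t$. Such particles can never have touched the aggregate, and since the conforming condition depends only on the first-coordinate projection of each walk and on the path of $X$, conditionally on the front history they form a Poisson field whose intensity at $U_t+e_1$ is given by exactly the one-dimensional formula; hence the front increments at rate at least $S(t)=\frac{K}{2}\P[\max_{0\leq s\leq t}W_s-Y_t(s)\leq 0\mid Y_t]$, and the monotone lower-bound structure of the one-dimensional proof (the construction via the Poisson process $\Pi$ and the fact that $S$ is increasing in $Y_t$) gives stochastic domination of the one-dimensional front by the $d$-dimensional one, which is what yields the corollary. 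If you want to salvage your approach, the fix is essentially to abandon the axis-column picture and couple front-increment rates rather than attachment events — which is precisely the conforming-particle argument.
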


Previously  Sidoravicius and  Stauffer~\cite{SidSta:16} studied the the case of $d\geq 2$ in a slightly different variant where particles instead perform a simple exclusion process.  They showed that for densities close to 1, that there is a positive probability that the aggregate grows with linear speed.  Also in Section~\ref{s:Higher} we describe how for $d\geq 2$ the upper bound on the threshold can be reduced further below 1, for example to $\frac56$ when $d=2$.  However, strikingly Eldan~\cite{Eldan:16} conjectured that the critical value is always 0, that is the aggregate grows with linear speed for all $K>0$.  We are inclined to agree with this conjecture but our methods do not suggest a way of reaching the threshold.  A better understanding of the growth of the standard DLA seems to be an important starting point.

\section{Basic results}
We will analyse the function valued process $Y_t$ given by,
\begin{equation}\label{e:yDefn}
Y_t(s) := \begin{cases}
X_t - X_{t-s} & 0 \leq s \leq t\\
\infty & s > t.
\end{cases}
\end{equation}
Let $\cF_t$ denote the filtration generated by $X_t$. We let $S(t)$ denote the infinitesimal rate at which $X_t$ increases given $\cF_t$.  Given $\cF_t$ the number of particles at $X_t+1$ is conditionally Poisson with intensity given by the probability that a random walker at $X_t+1$ at time $t$ was never located in the aggregate.  Each of the particles jumps to the left at rate $\frac12$ so with $W_t$ denoting an independent continuous time random walk,
\[
S(t) = \frac12 K \P[\max_{0\leq s \leq t} W_s - Y_t(s) \leq 0\mid Y_t].
\]
Note that $S(t)$ is an increasing as a function of $Y_t$.  Indeed we could realise $X_t$ as follows, let $\Pi$ be a Poisson process on $[0,\infty)^2$ and then \[
X_t = \Pi(\{(x,y): 0 \leq x \leq t, 0 \leq y \leq S(x)\}.
\]
Since both $X_t$ and $Y_t$ are increasing functions of $\Pi$ we can make use of the FKG property.  Also note that $Y_t$ is stochastically decreasing.

Most of our analysis will involve estimating $S(t)$ and using that to show that $Y_t$ does not become too small for too long.  Let $M_t = \max_{0\leq s \leq t} W_s$ be the maximum process of $W_t$.

\begin{lemma}\label{l:SDecomp}
For any $i\geq 0$ we have that
\[
S(t) \geq \frac{K}{2}\P[M_{2^i} = 0]\prod_{i'=i}^\infty \P[M_{2^{i'+1}} \leq Y_t(2^{i'}) \mid Y_t]
\]
\end{lemma}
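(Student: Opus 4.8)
The plan is to start from the exact identity $S(t) = \tfrac{K}{2}\,\P[\,W_s \le Y_t(s)\text{ for all }0\le s\le t \mid Y_t\,]$ obtained above (rewriting $\max_{0\le s\le t}(W_s-Y_t(s))\le 0$), and to lower bound the conditional probability by restricting to a smaller event built from the running maximum on a dyadic scale. Recall that $Y_t$ is non-decreasing in $s$ with $Y_t(0)=0$, since $X$ is non-decreasing and $Y_t(s)=X_t-X_{t-s}$; in particular $Y_t(s)\ge 0$ for all $s$. The key deterministic observation is the inclusion
\[
\{M_{2^i}=0\}\ \cap\ \bigcap_{i'=i}^{\infty}\{M_{2^{i'+1}}\le Y_t(2^{i'})\}\ \subseteq\ \{\,W_s\le Y_t(s)\ \text{for all }0\le s\le t\,\}.
\]
Indeed, on $[0,2^i]$ the event $\{M_{2^i}=0\}$ forces $W_s\le 0=Y_t(0)\le Y_t(s)$, and for any $s$ with $2^{i'}\le s\le 2^{i'+1}$ and $i'\ge i$ one has $W_s\le M_{2^{i'+1}}\le Y_t(2^{i'})\le Y_t(s)$ by monotonicity of $Y_t$, while $\bigcup_{i'\ge i}[2^{i'},2^{i'+1}]\cup[0,2^i]\supseteq[0,t]$. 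Note that for $2^{i'}>t$ we have $Y_t(2^{i'})=\infty$, so only finitely many factors in the product are nontrivial and no convergence issue arises.

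It then remains to bound the probability of this intersection from below by the product of the individual probabilities. Since the walk $W$ is independent of $Y_t$, we may condition on $Y_t$ and treat the thresholds $Y_t(2^{i'})$ as fixed. Each event $\{M_{2^i}=0\}=\{M_{2^i}\le 0\}$ and $\{M_{2^{i'+1}}\le Y_t(2^{i'})\}$ is a decreasing event for $W$: writing $W$ on a fine time mesh as a sum of independent increments (equivalently, as the difference of two independent Poisson jump processes), the running maximum $M_u$ is a coordinatewise non-decreasing function of those increments, so every event of the form $\{M_u\le a\}$ is decreasing. By the Harris/FKG inequality these decreasing events are positively correlated, hence
\[
\P\Big[\{M_{2^i}=0\}\cap\bigcap_{i'=i}^{\infty}\{M_{2^{i'+1}}\le Y_t(2^{i'})\}\ \Big|\ Y_t\Big]\ \ge\ \P[M_{2^i}=0]\,\prod_{i'=i}^{\infty}\P[M_{2^{i'+1}}\le Y_t(2^{i'})\mid Y_t].
\]
Combining this with the inclusion above and the formula for $S(t)$, and using $\P[M_{2^i}\le 0]=\P[M_{2^i}=0]$, yields the lemma.

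The only point that I expect to require some care is the FKG step: one must exhibit a genuine product structure for the continuous-time walk (discretize time, or split into independent left- and right-jump Poisson clocks) under which every event $\{M_u\le a\}$ is monotone in the same direction, and then pass to the continuous-time limit; this is routine but should be stated. Everything else reduces to chaining the inequalities $W_s\le M_{2^{i'+1}}\le Y_t(2^{i'})\le Y_t(s)$ on each dyadic block and invoking the independence of $W$ and $Y_t$.
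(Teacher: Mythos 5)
Your proof is correct and follows essentially the same route as the paper: restrict the event $\{\max_{0\le s\le t} W_s - Y_t(s)\le 0\}$ to the dyadic-scale event $\{M_{2^i}=0\}\cap\bigcap_{i'\ge i}\{M_{2^{i'+1}}\le Y_t(2^{i'})\}$ (using monotonicity of $Y_t$ in $s$) and then apply FKG/Harris to the decreasing running-maximum events to factorize the probability. Your write-up just makes explicit the inclusion and the product structure behind the FKG step, which the paper leaves implicit.
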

\begin{proof}
We have
\begin{align*}
S(t) &\geq \frac{K}2 \P[\max_{0\leq s \leq t} W_s - Y_t(s) \leq 0\mid Y_t]\\
&\geq \frac{K}2  \P[M_{2^i} = 0, \forall i' \geq i \ M_{2^{i'+1}} \leq Y_t(2^{i'})\mid Y_t] \\
&\geq \frac{K}2 \P[M_{2^{i}} = 0]\prod_{i' \geq i} \P[M_{2^{i'+1}} \leq Y_t(2^{i'})\mid Y_t]
\end{align*}
where the final inequality follows from the FKG inequality.
\end{proof}

By the reflection principle we have that for any integer $j \geq 0$,
\[
\P[M_t \geq  j] = \P[W_t \geq j] + \P[W_t \geq j+1].
\]
Thus asymptotically we have that
\begin{equation}\label{e:MTasymptotics}
\P[M_t=0] \approx \frac1{\sqrt{2\pi}} t^{-{1/2}}
\end{equation}
Now let $T_j$ be the first hitting time of $j$.  Since $\cosh(s)-1 \leq s^2$ for $0 \leq s \leq 1$ we have that for $t \geq 1$,
\[
\E[e^{\frac1{\sqrt{t}}W_{t \vee T_j}}] \leq \E[e^{\frac1{\sqrt{t}}W_{t}}] = e^{(\cosh(\frac1{\sqrt{t}})-1)t}\leq e^{1},
\]
and hence by Markov's inequality
\begin{equation}\label{e:reflectionBound}
\P[M_t \geq j t^{1/2}] \leq \P[e^{\frac1{\sqrt{t}}W_{t \vee T_j}}=e^{j}] \leq e^{1-j}.
\end{equation}
Plugging the above equations into Lemma~\ref{l:SDecomp} we get the following immediate corollary.
\begin{corollary}\label{c:SLowBound}
There exists $i^*$ such that the following holds.  Suppose that $i \geq i^*$ that for all $i'\geq i$ we have $j_{i'} = Y_t(2^{i'})2^{-{i'}/2}$.  Then
\[
S(t) \geq \frac{K}{10}2^{-{i'}/2}\prod_{i'=i}^\infty (1-e^{1-\max\{1,j_{i'}/\sqrt{2}\}})
\]
\end{corollary}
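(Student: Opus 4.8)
The plan is to combine Lemma~\ref{l:SDecomp} with the two quantitative estimates \eqref{e:MTasymptotics} and \eqref{e:reflectionBound}, treating the scalar prefactor and the factors of the infinite product separately. Fix $i\ge i^*$, with the absolute constant $i^*$ (depending on neither $t$ nor $Y_t$) to be chosen below, and start from the bound of Lemma~\ref{l:SDecomp},
\[
S(t)\ \ge\ \frac{K}{2}\,\P[M_{2^i}=0]\prod_{i'=i}^{\infty}\P\bigl[M_{2^{i'+1}}\le Y_t(2^{i'})\mid Y_t\bigr].
\]

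For the prefactor, \eqref{e:MTasymptotics} gives $2^{i/2}\,\P[M_{2^i}=0]\to\tfrac1{\sqrt{2\pi}}>\tfrac15$ as $i\to\infty$, so I would take $i^*$ large enough that $\P[M_{2^i}=0]\ge\tfrac15\,2^{-i/2}$ for every $i\ge i^*$; this yields $\tfrac{K}{2}\,\P[M_{2^i}=0]\ge\tfrac{K}{10}\,2^{-i/2}$, matching the prefactor in the statement. For a single factor of the product, fix $i'\ge i$ and set $j_{i'}=Y_t(2^{i'})\,2^{-i'/2}$. Since $W$ is independent of $Y_t$ the conditioning can be dropped, and since $\{M_{2^{i'+1}}\le Y_t(2^{i'})\}$ contains the complement of $\{M_{2^{i'+1}}\ge Y_t(2^{i'})\}$, we get $\P[M_{2^{i'+1}}\le Y_t(2^{i'})\mid Y_t]\ge 1-\P[M_{2^{i'+1}}\ge Y_t(2^{i'})]$. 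Now comes the one real manipulation: the dyadic rescaling $2^{i'/2}=2^{-1/2}(2^{i'+1})^{1/2}$ rewrites the threshold as $Y_t(2^{i'})=\tfrac{j_{i'}}{\sqrt2}(2^{i'+1})^{1/2}$, so applying \eqref{e:reflectionBound} with $t=2^{i'+1}\ge1$ and level parameter $j_{i'}/\sqrt2$ gives $\P[M_{2^{i'+1}}\ge Y_t(2^{i'})]\le e^{1-j_{i'}/\sqrt2}$; combining this with the trivial bound $\P[\cdot]\le 1=e^{1-1}$ gives
\[
\P\bigl[M_{2^{i'+1}}\le Y_t(2^{i'})\mid Y_t\bigr]\ \ge\ 1-e^{1-\max\{1,\,j_{i'}/\sqrt2\}}.
\]
Multiplying the prefactor bound by the product of these per-factor bounds produces the claimed inequality. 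This also covers the degenerate indices automatically: for $2^{i'}>t$ one has $Y_t(2^{i'})=\infty$, hence $j_{i'}=\infty$ and that factor equals $1$; and if some $j_{i'}$ is small the corresponding factor is $0$, so the bound is vacuously true there.

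I do not expect a substantive obstacle — the corollary is essentially a repackaging of Lemma~\ref{l:SDecomp} and the two estimates. The only points needing care are (i) choosing $i^*$ so that the asymptotic error in \eqref{e:MTasymptotics} is absorbed into the clean constant $\tfrac1{10}$, and (ii) the dyadic rescaling $2^{i'+1}=2\cdot2^{i'}$ that normalizes the threshold into the $j\,t^{1/2}$ form required by \eqref{e:reflectionBound} (this is what produces the $\sqrt2$), together with the $\max\{1,\cdot\}$ truncation, which is exactly what keeps every factor a genuine non-negative lower bound when $j_{i'}$ is close to $0$.
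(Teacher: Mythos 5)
Your proposal is correct and follows exactly the route the paper intends: the paper treats this as an immediate consequence of Lemma~\ref{l:SDecomp} combined with the asymptotics~\eqref{e:MTasymptotics} for the prefactor and the exponential bound~\eqref{e:reflectionBound} (after the dyadic rescaling producing the $\sqrt{2}$) for each factor of the product, which is precisely what you did.
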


%\begin{corollary}
%For some $i_0>0$ if $Y_t$ is permissive for all $i' \geq i\geq i_0$,
%\[
%S(t) \geq  \frac{K}{10}   2^{-i/2}.
%\]
%\end{corollary}
%\begin{proof}
%By Lemma~\ref{l:SDecomp} we have that
%\begin{align*}
%S(t) &\geq  \frac{K}2 \P[M_{2^{i}} = 0]\prod_{i' \geq i} \P[M_{2^{i'+1}} \leq Y_t(2^{i'})]\\
%&\geq  \frac{K}2 \P[M_{2^{i}} = 0] \prod_{i' \geq i} \P[M_{2^{i'+1}} \leq  i 2^{i/2}]\\
%&\geq \frac{K}3 \frac1{\sqrt{2\pi}} 2^{-i/2} \prod_{i' \geq i} (1 - e^{1-i/2}) \geq \frac{K}4 \frac1{\sqrt{2\pi}} 2^{-i/2}
%\end{align*}
%where we used the fact that $i_0$ is large and that $\P[M_{t} = 0] \approx \frac1{\sqrt{2\pi t}}$ by the reflection principe.
%\end{proof}

Next we check that provided $S(t)$ remains bounded below during an interval then we get a comparable lower bound on the speed of $X_t$.
\begin{lemma}\label{l:minSpeed}
We have that for all $\rho \in (0,1)$ there exists $\psi(\rho)>0$ such that for all $\Delta >0$,
\[
\P[\min_{s\in[t,t+\Delta]} S(s) \geq \gamma, X_{t+\Delta}-X_t \leq \rho \Delta \gamma \mid Y_t] \leq \exp(-\psi(\rho)\Delta \gamma)
\]
In the case of $\rho=\frac12$ we have $\psi(\rho) \geq \frac1{10}$.
\end{lemma}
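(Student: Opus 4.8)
The plan is to read the increment of $X$ as a counting process and apply a Chernoff bound. First I would observe that, given $\cF_t$ (equivalently, given $Y_t$, since $Y_t$ determines the whole path $(X_s)_{s\le t}$ because $X_0=0$), the process $N_s:=X_{t+s}-X_t$ is a counting process adapted to the time‑shifted filtration $(\cF_{t+s})_{s\ge 0}$ whose intensity at time $t+u$ is the predictable rate $S(t+u)$; this is exactly the content of the Poisson representation $X_t=\Pi(\{(x,y):0\le x\le t,\ 0\le y\le S(x)\})$. On the event $E:=\{\min_{s\in[t,t+\Delta]}S(s)\ge\gamma\}$ the intensity is at least $\gamma$ throughout, so the compensator satisfies $\int_0^\Delta S(t+u)\,du\ge\gamma\Delta$. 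The heuristic is then that a counting process run with intensity $\ge\gamma$ for time $\Delta$ lies below any fixed fraction $\rho<1$ of its mean $\gamma\Delta$ only with exponentially small probability, and I would make this precise with the exponential martingale of $N$.

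Concretely, I fix $\lambda>0$ and set
\[
M_s:=\exp\!\Big(-\lambda N_s+(1-e^{-\lambda})\int_0^s S(t+u)\,du\Big),\qquad s\ge 0 .
\]
This is the Dol\'eans--Dade exponential of $(e^{-\lambda}-1)\int_0^{\cdot}\!\big(dN_u-S(t+u)\,du\big)$, hence a nonnegative local martingale for $(\cF_{t+s})_{s\ge 0}$; being nonnegative it is a supermartingale, so $\E[M_\Delta\mid Y_t]\le M_0=1$. On $E$ we have $\int_0^\Delta S(t+u)\,du\ge\gamma\Delta$, and since $1-e^{-\lambda}>0$ this gives $M_\Delta\mathbf{1}_E\ge\exp\!\big(-\lambda N_\Delta+(1-e^{-\lambda})\gamma\Delta\big)\mathbf{1}_E$. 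Taking conditional expectations,
\[
1\ \ge\ e^{(1-e^{-\lambda})\gamma\Delta}\,\E\big[e^{-\lambda N_\Delta}\mathbf{1}_E\mid Y_t\big]\ \ge\ e^{(1-e^{-\lambda})\gamma\Delta}\,e^{-\lambda\rho\gamma\Delta}\,\P\big[E,\ N_\Delta\le\rho\gamma\Delta\mid Y_t\big],
\]
using $e^{-\lambda N_\Delta}\ge e^{-\lambda\rho\gamma\Delta}$ on $\{N_\Delta\le\rho\gamma\Delta\}$ and recalling $N_\Delta=X_{t+\Delta}-X_t$. Rearranging,
\[
\P\big[E,\ X_{t+\Delta}-X_t\le\rho\gamma\Delta\mid Y_t\big]\ \le\ \exp\!\Big(-\gamma\Delta\big[(1-e^{-\lambda})-\lambda\rho\big]\Big).
\]

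It remains only to optimise over $\lambda$. I would define $\psi(\rho):=\sup_{\lambda>0}\big[(1-e^{-\lambda})-\lambda\rho\big]$; the supremum is attained at $\lambda=-\ln\rho>0$ and equals $1-\rho+\rho\ln\rho$, which is strictly positive for every $\rho\in(0,1)$ (it vanishes at $\rho=1$ and is decreasing in $\rho$). For $\rho=\tfrac12$ the crude choice $\lambda=1$ already gives $(1-e^{-1})-\tfrac12=\tfrac12-e^{-1}>\tfrac1{10}$, so $\psi(\tfrac12)\ge\tfrac1{10}$, as claimed.

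The one place to be careful is the probabilistic bookkeeping rather than any hard estimate: that $s\mapsto S(t+s)$ is adapted (indeed predictable) for the shifted filtration so that $M$ genuinely is a local martingale, and that conditioning on $Y_t$ is the same as conditioning on $\cF_t$. Both are immediate here — $S(t+s)$ is a fixed functional of $Y_{t+s}$, which is $\cF_{t+s}$‑measurable, and $\sigma(Y_t)=\cF_t=\sigma(X_s:s\le t)$ since $X_0=0$ — so nonnegativity of $M$ is all that is needed to get the supermartingale inequality, with no localisation or integrability gymnastics. Everything else is the standard exponential‑martingale (Chernoff) estimate for counting processes with predictable intensity.
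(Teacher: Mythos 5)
Your proof is correct, and it arrives at exactly the paper's rate function: your $\psi(\rho)=\sup_{\lambda>0}\big[(1-e^{-\lambda})-\lambda\rho\big]=1-\rho+\rho\ln\rho$ is literally the same supremum the paper takes (there written as $\sup_\theta -(\theta\rho+e^{-\theta}-1)$), and your checks of positivity on $(0,1)$ and of $\psi(\tfrac12)\ge\tfrac1{10}$ are fine. The one structural difference is how you justify that the increment behaves like a Poisson count of mean at least $\gamma\Delta$ on the event $\{\min_{s\in[t,t+\Delta]}S(s)\ge\gamma\}$. The paper uses the graphical construction directly: on that event the region under the curve $S$ contains the rectangle $[t,t+\Delta]\times[0,\gamma]$, so $X_{t+\Delta}-X_t\ge \Pi([t,t+\Delta]\times[0,\gamma])$, which is exactly $\mathrm{Poisson}(\Delta\gamma)$ and independent of $\cF_t$, and then the elementary Poisson Chernoff bound finishes. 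You instead run the Dol\'eans--Dade exponential supermartingale for the counting process $N_s=X_{t+s}-X_t$ with predictable intensity $S(t+\cdot)$ and lower-bound the compensator on the event. Both are Chernoff arguments with the same exponent; the paper's coupling is more elementary (no stochastic-calculus input, no need to worry about local-martingale/predictability bookkeeping, since the dominating variable is an honest Poisson), while yours is more general (it would apply verbatim to any counting process with a predictable intensity bounded below, without an explicit Poisson-space realisation) and, as you note, avoids invoking independence of $\Pi$ on $[t,t+\Delta]$ from $\cF_t$ by replacing it with the supermartingale inequality. The only point worth stating carefully in your version is that the compensator is $\int_0^\Delta S((t+u)-)\,du$, which coincides with $\int_0^\Delta S(t+u)\,du$ since the two integrands differ at countably many points, so the lower bound $\ge\gamma\Delta$ on the event is unaffected; this is a cosmetic remark, not a gap.
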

\begin{proof}
Using the construction of the process in terms of $\Pi$ we have that
\begin{align*}
\P[\min_{s\in[t,t+\Delta]} S(s) \geq \gamma, X_{t+\Delta}-X_t \leq \rho \Delta \gamma \mid Y_t] &\leq \P[\Pi([t,t+\Delta]\times[0,\gamma]) \leq \rho \Delta \gamma]\\
&=\P[\hbox{Poisson}(\Delta \gamma) \leq \rho \Delta \gamma]
\end{align*}
Now if $N\sim \hbox{Poisson}(\Delta \gamma)$ then $\E e^{-\theta N} = \exp((e^{-\theta}-1)\Delta \gamma)$ and so by Markov's inequality
\[
\P[N \leq \rho \Delta \gamma] = \P[e^{-\theta N} \geq e^{- \theta \rho \Delta \gamma}] \leq \frac{\exp((e^{-\theta}-1)\Delta \gamma)}{\exp(-\theta \rho \Delta \gamma)} = \exp((\theta\rho+e^{-\theta}-1)\Delta \gamma).
\]
Setting $f_\rho(\theta) = -(\theta\rho+e^{-\theta}-1)$ and
\[
\psi(\rho) = \sup_{\theta \geq 0} f_\rho(\theta)
\]
it remains to check that $\psi(\rho)>0$.  This follows from the fact that $f_\rho(0)=0$ and $f^{'}_\rho(0) = 1-\rho > 0$.  Since $f_{\frac12}(\frac12) \geq \frac1{10}$ we have that $\psi(\frac12) \geq \frac1{10}$.
%The process
%\[
%U_s = \exp(\theta(X_{t+s}-X_t) - (e^\theta-1)\int_t^{t+s} S(x) dx)
%\]
%is a martingale. Hence taking $\theta=\frac14$,
%\begin{align*}
%&\P[-\frac14(X_{t+\Delta}-X_t) - (e^{-\frac14}-1)\int_t^{t+\Delta} S(x) dx \geq \frac{\Delta \gamma}{20}]\\
%&=\P[\exp(-\frac14(X_{t+\Delta}-X_t) - (e^{-\frac14}-1)\int_t^{t+\Delta} S(x) dx) \geq \exp(\frac{\Delta \gamma}{20})]\\
%&\leq \frac{\E[\exp(-\frac14(X_{t+\Delta}-X_t) - (e^{-\frac14}-1)\int_t^{t+\Delta} S(x) dx)]}{\exp(\frac{\Delta \gamma}{20})} = \exp(-\frac{\Delta \gamma}{20})
%\end{align*}
%and hence after rearranging we get
%\[
%\P[(X_{t+\Delta}-X_t) \leq  4(1-e^{-\frac14})\int_t^{t+\Delta} S(x) dx  -\frac{4\Delta \gamma}{20}] \leq \exp(-\frac{\Delta \gamma}{20})
%\]
%Thus since $4(1-e^{-\frac14}) - \frac4{20} > \frac12$ we have that
%\begin{align*}
%&\P[\min_{s\in[t,t+\Delta]} S(s) \geq \gamma, X_{t+\Delta}-X_t \leq \frac{\Delta \gamma}{2} ]\\
%&\leq\P[\min_{s\in[t,t+\Delta]} S(s) \geq \gamma, (X_{t+\Delta}-X_t) \leq  4(1-e^{-\frac14})\int_t^{t+\Delta} S(x) dx  -\frac{4\Delta \gamma}{20}] \leq \exp(-\frac{\Delta \gamma}{20}).
%\end{align*}
\end{proof}

\section{Proof of Positive Speed}

To measure our control over $Y_t$ and show that it is moving quickly enough we say that $Y_t$ is permissive at time $t$ and at scale $i$ if $Y_t(2^i) \geq  10 i 2^{i/2}$. Our approach, will be to consider functions
\[
y_\alpha(s)=\begin{cases}
0 &s  \leq \alpha^{-3/2}\\
\min\{\alpha (s-\alpha^{-3/2}), s^{1/2}\log_2 s\} & s \geq \alpha^{-3/2}.
\end{cases}
\]
and show that if $Y_t(s) \geq y_\alpha(s)$ for increasing values of $\alpha$ with good probability.  To measure the speed of the aggregate in an interval of time define events $\sR$ as
\[
\sR(t,s,\gamma) =\{X_{t+s}-X_t \geq \gamma s\}.
\]

\begin{lemma}\label{l:RWYalpha}
For all $\epsilon>0$ there exists $0<\alpha_\star(\epsilon) \leq 1$ such that for all $0<\alpha<\alpha_\star$,
\[
\P[\max_{s \geq 0} W_s - y_\alpha((s-\alpha^{-4/3})\wedge 0) \leq 0] \geq 2(1-\epsilon)\alpha.
\]
\end{lemma}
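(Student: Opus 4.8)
The plan is to condition on the evolution of $W$ up to the end of the flat part of the (right–shifted) barrier, and then run a short exponential–martingale argument for the subsequent crossing of the slowly rising part. Write $T_0:=\alpha^{-4/3}+\alpha^{-3/2}=\alpha^{-3/2}(1+o(1))$, so that the barrier $\tilde y_\alpha(s):=y_\alpha((s-\alpha^{-4/3})\vee 0)$ vanishes on $[0,T_0]$ and equals $\min\{\alpha(s-T_0),\,\sqrt{s-\alpha^{-4/3}}\log_2(s-\alpha^{-4/3})\}$ for $s\ge T_0$. The first step is to strip off the $\sqrt{\cdot}\,\log_2$ cap: the event $G:=\{W_s\le \tfrac1{\sqrt2}\sqrt s(\log_2 s-1)\ \forall s\ge 2T_0\}$ implies the cap constraint for all $s$ (for $s\in[T_0,2T_0]$ the line is already the binding constraint, since $\alpha T_0=o(\sqrt{T_0}\log_2 T_0)$), and a dyadic union bound fed by \eqref{e:reflectionBound} gives $\P[G^c]\le\eta(\alpha)$ with $\eta$ decaying like a positive power of $T_0^{-1}$; in particular $\eta(\alpha)=o(\alpha)$ because $T_0\gtrsim\alpha^{-3/2}$. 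Hence it suffices to bound $\P\big[\{M_{T_0}=0\}\cap\{W_{T_0+u}\le\alpha u\ \forall u\ge0\}\big]$ from below by $2(1-\epsilon)\alpha+o(\alpha)$.

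By the Markov property at time $T_0$ this probability equals $\E\big[\mathbf 1_{\{M_{T_0}=0\}}\,h(-W_{T_0})\big]$, where $h(y):=\P[\sup_{u\ge0}(W_u-\alpha u)\le y]$ is the probability that a fresh rate–$1$ walk stays below the line of slope $\alpha$ and intercept $y\ge0$. For $h$ I would use the martingale $e^{\lambda W_u-(\cosh\lambda-1)u}$: stopping it at the first crossing time of $u\mapsto\alpha u+y$ and choosing $\lambda=\lambda_\alpha$, the largest root of $\cosh\lambda-1=\alpha\lambda$, makes the time–exponential $\ge1$, so optional stopping yields $1-h(y)\le e^{-\lambda_\alpha y}$. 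Since $\cosh\lambda-1=\tfrac{\lambda^2}{2}(1+O(\lambda^2))$ one has $\lambda_\alpha\ge 2(1-\epsilon)\alpha$ for all small $\alpha$, and then $1-e^{-x}\ge(1-\epsilon)x$ for $0\le x\le 2\epsilon$ gives $h(y)\ge 2(1-\epsilon)^2\alpha\, y$ for all $0\le y\le Y_\star:=\epsilon/((1-\epsilon)\alpha)$.

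It remains to estimate $\E[|W_{T_0}|\,\mathbf 1_{\{M_{T_0}=0\}}]$. Here the reflection principle gives the exact identity $\P[M_{T_0}=0,\,W_{T_0}=-y]=\P[W_{T_0}=y]-\P[W_{T_0}=y+2]$ for integers $y\ge1$; multiplying by $y$, summing and telescoping collapses the sum to $\E[|W_{T_0}|\,\mathbf 1_{\{M_{T_0}=0\}}]=1-\P[W_{T_0}=0]-\P[W_{T_0}=1]$, which tends to $1$ as $T_0\to\infty$ by the local limit estimates behind \eqref{e:MTasymptotics}. Since $Y_\star\asymp\alpha^{-1}$ while $W_{T_0}$ lives on scale $\sqrt{T_0}\asymp\alpha^{-3/4}\ll Y_\star$, a crude Chebyshev bound shows the truncation to $\{|W_{T_0}|\le Y_\star\}$ discards only $o(1)$ of this mass. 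Putting the pieces together,
\[
\E\big[\mathbf 1_{\{M_{T_0}=0\}}h(-W_{T_0})\big]\ \ge\ 2(1-\epsilon)^2\alpha\;\E\big[|W_{T_0}|\,\mathbf 1_{\{M_{T_0}=0,\ |W_{T_0}|\le Y_\star\}}\big]\ \ge\ 2(1-\epsilon)^3\alpha
\]
for all small $\alpha$; subtracting the $o(\alpha)$ loss coming from $G$ and relabelling $\epsilon$ gives the claim, with $\alpha_\star(\epsilon)$ chosen small enough to absorb all the "for small $\alpha$" clauses above.

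The substantive point — and the only place where the exact exponents matter — is getting the \emph{sharp} constant $2$. This is exactly why the flat part of $y_\alpha$ has length of order $\alpha^{-3/2}$: it makes $\alpha\sqrt{T_0}\to 0$, so $1-e^{-2\alpha y}$ may be linearised to $2\alpha y$ across the whole bulk of the conditional law of $W_{T_0}$ given $\{M_{T_0}=0\}$, and it keeps the cap–error $\eta(\alpha)$ genuinely $o(\alpha)$; the companion ingredient is the clean identity $\E[|W_{T_0}|\mathbf 1_{\{M_{T_0}=0\}}]=1-\P[W_{T_0}=0]-\P[W_{T_0}=1]\to1$. Once these are in hand, what remains is bookkeeping the nested $(1-\epsilon)$ factors and checking that the tail and discretisation errors are all $o(\alpha)$.
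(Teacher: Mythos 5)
Your argument is sound and, in its skeleton, follows the same route as the paper: split off the flat part of the barrier at time $T_0\asymp\alpha^{-3/2}$ and use the reflection principle there, handle the linear part with the exponential martingale $e^{\lambda W_u-(\cosh\lambda-1)u}$ at the nonzero root $\lambda_\alpha=2\alpha(1+o(1))$ of $\cosh\lambda-1=\alpha\lambda$, and treat the $\sqrt{s}\log_2 s$ cap separately via dyadic blocks and \eqref{e:reflectionBound}. Two genuine differences are worth noting. First, the paper keeps the cap event in play and uses FKG to factor $\P[\sA\cap\sB]\ge\P[\sA]\P[\sB]$, so it only needs $\P[\sB]\to1$; you instead strip the cap by a union bound, which forces the stronger requirement $\P[G^c]=o(\alpha)$. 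That requirement does hold, but not for the reason you give: a ``positive power of $T_0^{-1}$'' is not automatically $o(\alpha)$, since $\alpha\asymp T_0^{-2/3}$, so you must check the exponent exceeds $2/3$. With your cap $\tfrac1{\sqrt2}\sqrt s(\log_2 s-1)$, the scale-$i$ term from \eqref{e:reflectionBound} is $e^{1-(i-1)/2}$, giving $\P[G^c]\lesssim T_0^{-1/(2\ln 2)}$ with $1/(2\ln2)\approx0.72>2/3$ --- fine, but this needs to be verified, and the margin is why the FKG route is more forgiving. Second, your exact telescoping identity $\E[|W_{T_0}|\mathbf 1_{\{M_{T_0}=0\}}]=1-\P[W_{T_0}=0]-\P[W_{T_0}=1]$ is a clean replacement for the paper's asymptotic computation via $\P[M_t=0]\sim 2/\sqrt{2\pi t}$ and the Rayleigh limit of $-W_t/\sqrt t$ given $M_t=0$: it produces the constant $1$ exactly and avoids the uniform-integrability point hidden in passing from weak convergence to convergence of the conditional mean, so on this step your write-up is arguably tighter than the original.

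The one step that fails as written is the truncation at $Y_\star\asymp\alpha^{-1}$: a crude Chebyshev/Markov bound gives only $\E[|W_{T_0}|\mathbf 1_{\{|W_{T_0}|>Y_\star\}}]\le \E[W_{T_0}^2]/Y_\star=T_0/Y_\star\asymp\alpha^{-1/2}$, which diverges rather than being $o(1)$, so the scale separation $\sqrt{T_0}\ll Y_\star$ alone does not close this. The fact you need is true, but you must invoke a genuine tail bound: for instance Cauchy--Schwarz plus \eqref{e:reflectionBound} (and symmetry) gives $\E[|W_{T_0}|\mathbf 1_{\{|W_{T_0}|>Y_\star\}}]\le \sqrt{T_0}\,\bigl(2e^{1-Y_\star/\sqrt{T_0}}\bigr)^{1/2}\to0$ since $Y_\star/\sqrt{T_0}\asymp\alpha^{-1/4}$, or you can telescope your reflection identity over $y>Y_\star$ and use the Gaussian-scale decay of $\P[W_{T_0}=y]$. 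With that repair, and the exponent check above made explicit, your proof is complete and correct.
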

\begin{proof}
For small $\alpha_\star(\epsilon)$ we have that for $\alpha^{-3/2} \leq s\leq \alpha^{-2}$,
\[
\alpha (s-\alpha^{-4/3} -\alpha^{-3/2}) \leq (s-\alpha^{-4/3})^{1/2}\log_2 (s-\alpha^{-4/3})
\]
Hence with $\xi=\xi_\alpha=\alpha^{4/3}+\alpha^{-3/2}$ if we set
\[
\sA=\{\max_{s \geq 0} W_s - \alpha((s-\xi)\wedge 0) \leq 0\}
\]
and
\[
\sB=\{\max_{s \geq \alpha^{-2}} W_s - (s-\alpha^{-4/3})^{1/2}\log_2 (s-\alpha^{-4/3}) \leq 0\}
\]
then
\[
\P[\max_{s \geq 0} W_s - y_\alpha((s-\alpha^{4/3})\wedge 0) \leq 0] \geq \P[\sA,\sB] \geq \P[\sA]\P[\sB].
\]
where the second inequality follows by the FKG inequality since $\sA$ and $\sB$ are both decreasing events for $W_s$.  For large $s$, we have $s^{1/2}\log_2 s \leq 2 (s/2)^{1/2} \log_2 (s/2)$ and so
\begin{align*}
\P[\sB] &\geq \P[\max_{s \geq \alpha^{-2}} W_s - \tfrac12 s^{1/2}\log_2 (\tfrac12 s) \leq 0]\\
&\geq \P[\forall i \geq \lfloor \log_2 (\alpha^{-2})\rfloor M_{2^{i+1}} \leq \tfrac12 (i-1) 2^{i/2} ]\\
&\geq \prod_{i \geq \lfloor \log_2 (\alpha^{-2})\rfloor} \P[M_{2^{i+1}} \leq \tfrac12 (i-1) 2^{i/2} ]\\
&\geq \prod_{i \geq \lfloor \log_2 (\alpha^{-2})\rfloor} e^{1-(i-1)2{-3/2}}
\end{align*}
where the third inequality follows from the FKG inequality and the final inequality is by equation~\eqref{e:reflectionBound}.  Thus as $\alpha\to 0$ we have that $\P[\sB] \to 1$ so it is sufficient to show that for small enough $\alpha$  that $\P[\sA] \geq 2\alpha(1-\epsilon/2)$.  By the reflection principle for $a \leq 1$,
\[
\P[M_t \geq 1, W_t = a] = \P[M_t \geq 1, W_t = 2-a]=\P[ W_t = 2-a] = \P[W_t = a-2],
\]
and so
\begin{align*}
\P[M_t \geq 1]&=\sum_{a> 1} \P[M_t \geq 1, W_t = a] + \sum_{a \leq 1} \P[M_t \geq 1, W_t = a]\\
&=\sum_{a> 1} \P[M_t \geq 1, W_t = a] + \sum_{a \leq 1} \P[W_t = a-2]\\
&= 1- \P[W_t=0]-\P[W_t=1].
\end{align*}
Hence by the Local Central Limit Theorem,
\[
\lim_t \sqrt{t}\P[M_t = 0] = \lim_t \sqrt{t}\left(\P[W_t=0]+\P[W_t=1]\right) = \frac{2}{\sqrt{2\pi}}.
\]
Also we have for $a\leq 0$,
\[
\P[W_t=a, M_t=0] = \P[W_t=a] - \P[W_t=a, M_t \geq 1] = \P[W_t = a] - \P[W_t = a-2]
\]
and so the law of $W_t$ conditioned on $M_t=0$ satisfies,
\begin{align*}
\lim_t\P[\frac1{\sqrt{t}} W_t \leq x \mid M_t=0] &= \lim_t\frac{\sum_{a=-\infty}^{x\sqrt{t}}\P[W_t = a] - \P[W_t = a-2]}{\P[M_t = 0]}\\
&=\lim_t \frac{\P[W_t = x\sqrt{t}] +\P[W_t = x\sqrt{t}-1]}{\P[M_t = 0]}\\
&=\lim_t\frac{2\frac{1}{\sqrt{2\pi}} e^{-x^2/2}}{\frac{2}{\sqrt{2\pi}}}= e^{-x^2/2}
\end{align*}
where $x \leq 0$ and hence is the negative of the Rayleigh distribution.  Now let $Z_t = W_t - \alpha t$ and $U_t = e^{\theta Z_t}$.  Then
\[
\E U_t = \exp((\cosh(\theta)-1-\alpha\theta)t).
\]
As $f_\alpha(\theta) = \cosh(\theta)-1-\alpha\theta$ is strictly convex, it has two roots, one of which is at $\theta=0$.  Let $\theta_\alpha$ be the non-zero root of $f_\alpha$.  Since
\[
f_\alpha(\theta)=-\alpha\theta+\frac12 \theta^2 + O(\theta^4)
\]
for small $\alpha$ we have that $\theta_\alpha=2\alpha + O(\alpha^2)$.  Then with $\theta=\theta_\alpha$ we have that $U_t = e^{\theta_\alpha Z_t}$ is a martingale.  Let $T = \min_t Z_t > 0$ and so by the Optional Stopping Theorem,
\[
\E[U_T\mid Z_T=z] = \E[U_0\mid Z_T=z] = e^{\theta_\alpha x}.
\]
Also since $U_T\in[0,1]$ if $T < \infty$ so
\[
\E[U_T\mid Z_T=z] \geq \P[T < \infty\mid Z_T=z]
\]
so
\[
\P[T < \infty\mid Z_T=z] \leq e^{-\theta_\alpha}.
\]
Thus we have that as $\alpha\to 0$,
\begin{align*}
\P[\sA] &=\P[\max_{s \geq 0} W_s - \alpha((s-\xi)\wedge 0) \leq 0]\\
&= \sum_{x=-\infty}^{0} \P[M_\xi = 0, W_\xi=x] \P[T = \infty\mid Z_T=z]\\
&\geq \P[M_\xi = 0] \sum_{x=-\infty}^{0} \P[W_\xi=x \mid M_\xi = 0](1-e^{\theta_\alpha x})\\
&\geq \frac{2+o(1)}{\sqrt{2\pi t}} \sum_{x=-\infty}^{0} \P[W_\xi=x \mid M_\xi = 0] (-2\alpha x)\\
&\to  \frac{4\alpha}{\sqrt{2\pi t}}\sqrt{\frac{\pi}{2}} = 2\alpha,
\end{align*}
since the mean of the Rayleigh distribution is $\sqrt{\frac{\pi}{2}}$.  This completes the lemma.
\end{proof}

\begin{lemma}\label{l:initialYalpha}
For all $K>1$ there exists $i_\star(K)$ such that if $i \geq i_\star$ and $Y_T$ is permissive at all levels $i$ and above then with
\[
\alpha=\frac1{80} 2^{-i/2}
\]
we have that
\[
\P[\inf_s Y_{T+2^{i}}(s) - y_{\alpha}(s) \geq 0\mid Y_T] \geq 1-\exp(-2^{i/10}).
\]
\end{lemma}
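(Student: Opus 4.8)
The plan is to prove the contrapositive-free statement directly: conditioning on the event that $Y_T$ is permissive at all scales $\geq i$, bound the failure probability $\P[\exists s:\ Y_{T+2^i}(s)<y_\alpha(s)\mid \cF_T]$. Everything will be extracted from the two quantitative inputs already at hand: the deterministic lower bound on $S(\cdot)$ from Corollary~\ref{c:SLowBound} and the large deviation estimate for the growth of $X$ from Lemma~\ref{l:minSpeed}. The crucial first step is the observation that permissiveness of the \emph{single} function $Y_T$ forces $S(u)$ to stay bounded below on the \emph{whole} window $[T,T+2^i]$. Indeed, for $u\in[T,T+2^i]$ and $i'\geq i+1$ we have $u-2^{i'}\leq T$, and $2^{i'}-(u-T)\geq 2^{i'}-2^i\geq 2^{i'-1}$, so since $X$ is nondecreasing and $Y_T$ is nondecreasing in its argument, $Y_u(2^{i'})=(X_u-X_T)+Y_T(2^{i'}-(u-T))\geq Y_T(2^{i'-1})\geq 10(i'-1)2^{(i'-1)/2}$ by permissiveness at scale $i'-1\geq i$; equivalently $Y_u(2^{i'})2^{-i'/2}\geq \tfrac{10}{\sqrt2}(i'-1)$. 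Feeding this into Corollary~\ref{c:SLowBound} applied at scale $i+1$ (so we also ask $i_\star\geq i^*-1$) gives, on the permissive event, $S(u)\geq\gamma$ for every $u\in[T,T+2^i]$, where $\gamma=\tfrac{K}{10\sqrt2}\big(\prod_{m\geq i}(1-e^{1-5m})\big)2^{-i/2}$; for $i$ large $\gamma\geq \tfrac1{15}2^{-i/2}$, and in particular $\gamma\geq 4\alpha$, this last inequality being exactly where the hypothesis $K>1$ is used.

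Next, split $s$ into three ranges. For $s\leq\alpha^{-3/2}$ there is nothing to check since $y_\alpha(s)=0\leq Y_{T+2^i}(s)$. For $s\geq 2^{i+1}$, writing $s\in[2^j,2^{j+1}]$ with $j\geq i+1$ (the case $s>T+2^i$ being vacuous since then $Y_{T+2^i}(s)=\infty$), we have $s-2^i\geq 2^j-2^i\geq 2^{j-1}$, so $Y_{T+2^i}(s)\geq Y_T(s-2^i)\geq Y_T(2^{j-1})\geq 10(j-1)2^{(j-1)/2}$, while $y_\alpha(s)\leq s^{1/2}\log_2 s\leq (j+1)2^{(j+1)/2}$; since $10(j-1)\geq 2(j+1)$ for $j\geq 2$, this range is handled deterministically on the permissive event. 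The only stochastic range is $\alpha^{-3/2}\leq s\leq 2^{i+1}$, which (taking $i_\star$ large enough that $\alpha^{-3/2}\leq 2^i$) we cover by the dyadic blocks $[2^j,2^{j+1}]$ for $j_0\leq j\leq i$ with $j_0=\lfloor\log_2\alpha^{-3/2}\rfloor$; on this range $y_\alpha$ is on its linear branch, so $y_\alpha(s)\leq y_\alpha(2^{j+1})\leq 2\alpha\cdot 2^j$ on the $j$-th block.

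For a fixed such $j$, apply Lemma~\ref{l:minSpeed} with $t=T+2^i-2^j\geq T$, $\Delta=2^j$ and $\rho=\tfrac12$. Since $[t,t+\Delta]\subseteq[T,T+2^i]$, the first step gives $\min_{v\in[t,t+\Delta]}S(v)\geq\gamma$ on the permissive event, so (conditioning down from $\cF_t$ to $\cF_T$, on which the permissive event lives)
\[
\P[\,Y_{T+2^i}(2^j)\leq \tfrac12\,2^j\gamma \mid \cF_T\,]\ \leq\ \exp(-\tfrac1{10}\,2^j\gamma).
\]
On the complement of this event, monotonicity of $Y_{T+2^i}$ together with $\tfrac12\,2^j\gamma\geq 2\alpha\,2^j\geq y_\alpha(2^{j+1})\geq y_\alpha(s)$ for $s\in[2^j,2^{j+1}]$ (here $\gamma\geq 4\alpha$ is used) gives $Y_{T+2^i}(s)\geq Y_{T+2^i}(2^j)\geq y_\alpha(s)$ throughout the block. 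Summing over $j_0\leq j\leq i$, the bounds decay doubly exponentially in $j$, so the total is at most $2\exp(-\tfrac1{10}\,2^{j_0}\gamma)\leq 2\exp(-\tfrac1{20}\,\alpha^{-3/2}\gamma)$. Since $\alpha^{-3/2}\gamma=80^{3/2}\big(\prod_{m\geq i}(1-e^{1-5m})\big)\tfrac{K}{10\sqrt2}\,2^{i/4}$ is of order $2^{i/4}$, which dominates $2^{i/10}$, this is at most $\exp(-2^{i/10})$ once $i_\star(K)$ is taken large enough; combined with the first two ranges this is the claimed bound.

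The one genuinely non-routine point is the first step: realizing that the permissiveness hypothesis, although it only constrains $Y_T$, propagates to a uniform lower bound $S(u)\geq\gamma$ over all of $[T,T+2^i]$ via the pathwise inequality $Y_u(2^{i'})\geq Y_T(2^{i'-1})$ — this is what lets us feed every sub-interval into Lemma~\ref{l:minSpeed} and makes the multi-scale union bound go through. Everything else is bookkeeping with constants, the assumption $K>1$ being exactly what yields the slack $\gamma\geq 4\alpha$. (Lemma~\ref{l:RWYalpha} is not needed for this lemma; it enters the complementary task of propagating $Y_t\geq y_\alpha$ forward in time.)
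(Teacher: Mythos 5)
Your proof is correct and follows essentially the same route as the paper's: the same propagation of permissiveness from $Y_T$ to a uniform lower bound on $S$ over $[T,T+2^i]$ via Corollary~\ref{c:SLowBound}, then Lemma~\ref{l:minSpeed} with $\rho=\tfrac12$ applied to sub-intervals with a union bound, and the same three-range verification against $y_\alpha$. The only difference is bookkeeping: you use dyadic lookback intervals anchored at $T+2^i$ (exploiting that $y_\alpha$ vanishes on $[0,\alpha^{-3/2}]$), whereas the paper partitions $[T,T+2^i]$ into $2^{i/3}$ blocks of length $2^{2i/3}$ anchored at $T$; this only changes constants in the exponent, not the argument.
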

\begin{proof}
Since $Y_T(2^{i'}) \geq 10 i' 2^{i'/2}$ for all $i' \geq i$ if we set
\[
\tilde{y}(s) = \begin{cases}
0 &s < 2^{i+1}\ , \\
10 j 2^{j/2} &s\in [2^{j+1},2^{j+2}), \ j \geq i.
\end{cases}
\]
then since $Y_{T+u}(s) \geq Y_T(s-u)$ then
\begin{equation}\label{e:initialYalpha1}
\inf_{0\leq u \leq 2^{i}} \inf_{s \geq 0} Y_{T+u}(s) - \tilde{y}(s) \geq 0.
\end{equation}
By Corollary~\ref{c:SLowBound} for all $t\in [0,2^i]$
\[
S(t) \geq \frac{1}{10}2^{-(i+1)/2}\prod_{i'=i}^\infty (1-e^{1-\max\{1,5(i'-1)\}}) \geq \frac1{20} 2^{-i/2} \ ,
\]
where the second inequality holds provided that $i_\star(K)$ is sufficiently large.  Defining $\sD$ as the event that $X_t$ moves at rate at least $\frac1{40} 2^{-i/2}$ for each interval $\ell 2^{2i/3},(\ell+1)2^{2i/3}]$,
\[
\sD=\bigcap_{\ell=0}^{2^{i/3}-1} \sR(T+\ell 2^{2i/3},2^{2i/3},\frac1{40} 2^{-i/2})
\]
by Lemma~\ref{l:minSpeed} we have that
\[
\P[\sD] \geq  1 - 2^{i/3}\exp(-\frac1{10}\cdot 2^{2i/3} \cdot \frac1{40} 2^{-i/2})\geq 1-\exp(-2^{i/10})
\]
where the last inequality holds provided that $i_\star(K)$ is sufficiently large.  We claim that on the event $\sD$, we have that $Y_{T+2^{i}}(s) \geq y_{\alpha}(s)$ for all $s$.  For $s \geq 2^{i+1}$ this holds since by equation~\eqref{e:initialYalpha1} we have that
\[
Y_{T+2^{i}}(s) \geq \tilde{y}(s) \geq s^{1/2}\log_2 s \geq y_\alpha(s).
\]
For $0 \leq s \leq 2^{i}$, on the event $\sD$,
\[
Y_{T+2^i}(s) \geq \lfloor s 2^{-2i/3} \rfloor 2^{2i/3} \frac1{40} 2^{-i/2} \geq \max\{0,s-\alpha^{-3/2}\} \frac1{40} 2^{-i/2} \geq y_{\alpha}(s),
\]
and for $2^i \leq s \leq 2^{i+1}$
\[
Y_{T+2^i}(s) \geq Y_{T+2^i}(2^i) \geq 2^{i} \cdot \frac1{40} 2^{-i/2} \geq  y_{\alpha}(2^{i+1}).
\]
Thus for all $s\geq 0$, $Y_{T+2^{i}}(s) \geq y_{\alpha}(s)$ which completes the proof.
\end{proof}
\begin{lemma}\label{l:maintainYalpha}
For all $K>1$, there exists $\Delta(K)$ and $\chi(K)>0$ such that if $0\leq \alpha \leq \Delta$ and $\inf_s Y_T(s) - y_\alpha(s) = 0$ then
\[
\P\Big[\sR\big(T,\alpha^{-4/3},  \frac{ \alpha(K + 1)}{2}\big)^c\mid Y_T\Big] \leq \exp\Big(-\chi(K) \alpha^{-1/3}\Big).
\]
\end{lemma}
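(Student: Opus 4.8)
The plan is to combine a \emph{deterministic} lower bound on the instantaneous rate $S(\cdot)$ throughout the interval $[T,T+\alpha^{-4/3}]$ with the concentration estimate of Lemma~\ref{l:minSpeed}. The hypothesis $\inf_s(Y_T(s)-y_\alpha(s))=0$ gives $Y_T(s)\ge y_\alpha(s)$ for every $s$, and this bound propagates forward in time: since $X$ is non-decreasing, $Y_{T+u}(s)\ge Y_T\big((s-u)\vee 0\big)$ for every $u\ge 0$, so for all $u\in[0,\alpha^{-4/3}]$ and all $s\ge 0$,
\[
Y_{T+u}(s)\ \ge\ y_\alpha\big((s-u)\vee 0\big)\ \ge\ y_\alpha\big((s-\alpha^{-4/3})\vee 0\big),
\]
using that $y_\alpha$ is non-decreasing. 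Thus on $[T,T+\alpha^{-4/3}]$ the function $Y_t$ dominates the shifted barrier appearing in Lemma~\ref{l:RWYalpha}.

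First I would turn this into a lower bound for $S$. Since $S(t)=\tfrac K2\,\P[\max_{0\le s\le t}W_s-Y_t(s)\le 0\mid Y_t]$ is monotone in $Y_t$, the domination above yields, for every $t\in[T,T+\alpha^{-4/3}]$,
\[
S(t)\ \ge\ \tfrac K2\,\P\big[\,\max_{s\ge 0}W_s-y_\alpha\big((s-\alpha^{-4/3})\vee 0\big)\le 0\,\big]\ \ge\ K(1-\epsilon)\alpha
\]
by Lemma~\ref{l:RWYalpha}, provided $\alpha\le\alpha_\star(\epsilon)$. The essential point is that this holds for every realisation, not merely almost surely: it invokes only the monotonicity of $X$ together with Lemma~\ref{l:RWYalpha}, so conditionally on $Y_T$ the event $\{\min_{s\in[T,T+\alpha^{-4/3}]}S(s)\ge\gamma\}$ with $\gamma:=K(1-\epsilon)\alpha$ has probability one.

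Now choose $\epsilon=\epsilon(K)\in\big(0,\tfrac{K-1}{2K}\big)$ --- possible precisely because $K>1$ --- and set $\rho=\rho(K):=\frac{K+1}{2K(1-\epsilon)}\in(0,1)$ and $\Delta(K):=\alpha_\star(\epsilon(K))\le 1$. Apply Lemma~\ref{l:minSpeed} with this $\gamma$, this $\rho$, and $\Delta=\alpha^{-4/3}$: since $\{\min_{s\in[T,T+\Delta]}S(s)\ge\gamma\}$ is certain, the bound reduces to
\[
\P\big[\,X_{T+\alpha^{-4/3}}-X_T\le\rho\Delta\gamma\,\big|\,Y_T\,\big]\ \le\ \exp(-\psi(\rho)\Delta\gamma)\ =\ \exp\!\big(-\psi(\rho)\,K(1-\epsilon)\,\alpha^{-1/3}\big).
\]
Since $\rho\Delta\gamma=\frac{K+1}{2K(1-\epsilon)}\cdot\alpha^{-4/3}\cdot K(1-\epsilon)\alpha=\frac{K+1}{2}\alpha^{-1/3}$, the event on the left contains $\sR\big(T,\alpha^{-4/3},\tfrac{\alpha(K+1)}{2}\big)^c$, and the lemma follows with $\chi(K):=\psi(\rho(K))\,K(1-\epsilon(K))>0$.

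The only genuinely delicate point is the propagation step: one must verify the barrier bookkeeping --- that a forward shift of $y_\alpha$ by exactly $\alpha^{-4/3}$ is what domination over an interval of length $\alpha^{-4/3}$ produces, including the degenerate regime $s\le\alpha^{-4/3}$ where both sides vanish --- and that the margin $K>1$ leaves room to pick $\epsilon$ with $K(1-\epsilon)>\tfrac{K+1}{2}$, i.e.\ $\rho<1$. Everything else is a direct substitution into Lemmas~\ref{l:RWYalpha} and~\ref{l:minSpeed}.
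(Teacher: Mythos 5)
Your proposal is correct and follows essentially the same route as the paper: both use monotonicity of $X$ to propagate the barrier $y_\alpha$ (shifted by at most $\alpha^{-4/3}$) forward in time, deduce a deterministic lower bound $S(T+t)\gtrsim K(1-\epsilon)\alpha$ on the whole interval from Lemma~\ref{l:RWYalpha}, and then feed this into Lemma~\ref{l:minSpeed} with a $\rho<1$ whose existence is exactly the room afforded by $K>1$. The only difference is cosmetic: the paper fixes $\epsilon=\frac{K-1}{3K}$ (so $S\ge\frac{\alpha(2K+1)}{3}$ and $\rho=\frac{3K+3}{4K+2}$), whereas you keep $\epsilon$ generic with $K(1-\epsilon)>\frac{K+1}{2}$, which yields the same conclusion with a slightly different constant $\chi(K)$.
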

\begin{proof}
With $\alpha_\star(\epsilon)$ defined as in Lemma~\ref{l:RWYalpha} set $\Delta(K)=\alpha_\star(\frac{K-1}{3K})$.  Then for $0\leq s \leq \alpha^{-4/3}$
\begin{align*}
S(T+t) &= \frac{K}{2}\P[\max_{0\leq s \leq t} W_s - Y_{T+t}(s) \leq 0\mid Y_{T+t}]\\
&\geq \frac{K}{2}\P[\max_{s \geq 0} W_s - y_\alpha((s-\alpha^{4/3})\wedge 0) \leq 0]\\
&\geq \frac{K}{2} 2\big(1-\frac{K-1}{3K}\big)\alpha = \frac{ \alpha(2K + 1)}{3}
\end{align*}
where the first inequality follows from the fact that
\[
Y_{T+t}(s) \geq Y_T(s-\alpha^{4/3})\wedge 0) \geq y_\alpha(s-\alpha^{4/3})\wedge 0)
\]
and the second inequality follows from Lemma~\ref{l:RWYalpha}.  Now take $\rho = \frac{3K+3}{4K+2}<1$ and with $\psi$ defined in Lemma~\ref{l:minSpeed} set $\chi(K) = \psi(\rho)$.  Then since
\[
\inf_{0\leq t \leq \alpha^{-4/3}}  S(T+t)\geq \frac{ \alpha(2K + 1)}{3} =\rho \frac{ \alpha(K + 1)}{2}
\]
by Lemma~\ref{l:minSpeed} we have that
\[
\P\Big[\sR\big(T,\alpha^{-4/3},  \frac{ \alpha(K + 1)}{2}\big)^c \mid Y_T\Big] \leq \exp\Big(-\chi(K) \alpha^{-1/3}\Big).
\]
\end{proof}

This result is useful because of the following claim.
\begin{claim}\label{c:rplusYalpha}
For some $0 \leq \alpha \leq \frac12$ suppose that $\inf_s Y_T(s) - y_\alpha(s) = 0$.  Then for an $0 \leq t \leq \alpha^{-3/2}$ and $\gamma \geq 1$ on the event $\sR\big(T,t,  \alpha \gamma\big)$ we have that $\inf_s Y_{T+t}(s) - y_\alpha(s) = 0$.
\end{claim}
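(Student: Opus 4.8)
The plan is to reduce the claim to the pointwise statement that, on $\sR(T,t,\alpha\gamma)$, one has $Y_{T+t}(s)\ge y_\alpha(s)$ for every $s\ge 0$. Indeed the hypothesis $\inf_s\big(Y_T(s)-y_\alpha(s)\big)=0$ gives in particular $Y_T\ge y_\alpha$ pointwise, and once we know $Y_{T+t}\ge y_\alpha$ pointwise the infimum of $Y_{T+t}(s)-y_\alpha(s)$ is nonnegative; it is moreover $\le 0$ because $Y_{T+t}(0)=0=y_\alpha(0)$ (using $0\le\alpha^{-3/2}$), so it equals $0$, which is what is asserted.

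The first ingredient is a shift identity for $Y$. Since $X$ is nondecreasing and $Y_u(r)=X_u-X_{u-r}$ for $0\le r\le u$, for $s\ge t$ we have
\[
Y_{T+t}(s)=X_{T+t}-X_{T+t-s}=(X_{T+t}-X_T)+\big(X_T-X_{T-(s-t)}\big)=(X_{T+t}-X_T)+Y_T(s-t),
\]
with the convention that both sides equal $\infty$ when $s>T+t$. On the event $\sR(T,t,\alpha\gamma)$ we have $X_{T+t}-X_T\ge\alpha\gamma t\ge\alpha t$ since $\gamma\ge 1$, and combining this with $Y_T\ge y_\alpha$ yields $Y_{T+t}(s)\ge\alpha t+y_\alpha(s-t)$ for all $s\ge t$. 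For $0\le s<t$ we instead note that $s<t\le\alpha^{-3/2}$ forces $y_\alpha(s)=0\le Y_{T+t}(s)$, so nothing needs to be done there.

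It then remains to prove the deterministic inequality $\alpha t+y_\alpha(s-t)\ge y_\alpha(s)$ for all $s\ge t$, whenever $0\le t\le\alpha^{-3/2}$ and $0\le\alpha\le\frac12$. I would split into cases according to where $s$ and $s-t$ lie relative to $\alpha^{-3/2}$. If $s\le\alpha^{-3/2}$ then $y_\alpha(s)=0$ and there is nothing to prove. If $s-t\le\alpha^{-3/2}\le s$ then $y_\alpha(s-t)=0$ while $y_\alpha(s)\le\alpha(s-\alpha^{-3/2})\le\alpha t$, the last step because $s-\alpha^{-3/2}\le s-(s-t)=t$. Finally, if $s-t\ge\alpha^{-3/2}$, I would exploit that on $[\alpha^{-3/2},\infty)$ the function $y_\alpha$ equals $\min\{\alpha(\cdot-\alpha^{-3/2}),\,g\}$ with $g(u)=u^{1/2}\log_2 u$, and that $g$ is concave on $(1,\infty)$ (a direct second-derivative computation) while $\alpha^{-3/2}\ge 2^{3/2}>1$; hence $y_\alpha$ is concave on $[\alpha^{-3/2},\infty)$, and since $g(\alpha^{-3/2})>0$ the affine branch is active just to the right of $\alpha^{-3/2}$, so the right derivative of $y_\alpha$ there equals $\alpha$. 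Concavity then forces every increment of $y_\alpha$ over an interval of length $t$ contained in $[\alpha^{-3/2},\infty)$ to be at most $\alpha t$, giving $y_\alpha(s)-y_\alpha(s-t)\le\alpha t$ as required.

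The main thing to be careful about is this last concavity step, specifically the claim that the relevant one-sided derivative of $y_\alpha$ at $\alpha^{-3/2}$ is $\alpha$ (coming from the linear piece) and not $g'(\alpha^{-3/2})$; this is exactly where the hypothesis $\alpha\le\frac12$ enters, through $g(\alpha^{-3/2})=\alpha^{-3/4}\cdot\tfrac32\log_2(\alpha^{-1})>0$. Apart from that the proof is simply the assembly of the shift identity, the definition of $\sR$, and the hypothesis $Y_T\ge y_\alpha$, after which one reads off that $\inf_s\big(Y_{T+t}(s)-y_\alpha(s)\big)=0$.
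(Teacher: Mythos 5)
Your proof is correct and takes essentially the same route as the paper: the shift identity $Y_{T+t}(s)=Y_T(s-t)+(X_{T+t}-X_T)$ for $s\geq t$, the bound $X_{T+t}-X_T\geq \alpha\gamma t\geq \alpha t$ from $\sR(T,t,\alpha\gamma)$, the trivial range $s\leq \alpha^{-3/2}$ where $y_\alpha=0$, and the fact that $y_\alpha$ increases at rate at most $\alpha$. The paper justifies that last step in one line by saying the derivative of $y_\alpha$ is uniformly bounded by $\alpha$; your case split together with the concavity of $\min\{\alpha(\cdot-\alpha^{-3/2}),\,s^{1/2}\log_2 s\}$ on $[\alpha^{-3/2},\infty)$ simply makes that assertion explicit.
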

\begin{proof}
Since $y_\alpha(s)=$ for $0\leq s \leq \alpha^{-3/2}$ it is sufficient to check $s \geq \alpha^{-3/2}$.  Then
\begin{align*}
Y_{T+t}(s) &= Y_T(s-t) + X_{T+t} - X_t\\
&\geq Y_T(s-t) + \alpha \gamma t\\
&\geq y_\alpha(s-t) + \alpha \gamma t \\
&\geq y_\alpha(s) - \alpha t + \alpha \gamma t \geq y_\alpha (t),
\end{align*}
where the first inequality is by the event $\sR\big(T,t,  \alpha \gamma\big)$, the second is by assumption and the third is since $\frac{d}{ds} y_\alpha(s)$ is uniformly bounded above by $\alpha$.
\end{proof}

\begin{lemma}\label{l:progressiveAlpha}
For all $K>1$, there exists $\Delta(K)$ and $\chi(K)>0$ such that if $0\leq \alpha \leq \Delta$ and $\inf_s Y_T(s) - y_\alpha(s) = 0$ then
\[
\P\Big[\inf_s Y_{T+\alpha^{-3}}(s) - y_{\frac{\alpha(K + 1)}{2} }(s) =0 \mid Y_T\Big] \leq \alpha^{-5/3}\exp\Big(-\chi(K) \alpha^{-1/3}\Big).
\]
\end{lemma}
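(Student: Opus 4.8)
The plan is to subdivide $[T,T+\alpha^{-3}]$ into $M:=\alpha^{-5/3}$ consecutive blocks $I_\ell=[T+\ell\alpha^{-4/3},T+(\ell+1)\alpha^{-4/3}]$, $0\le\ell\le M-1$, each of length $\alpha^{-4/3}$ (note $M\alpha^{-4/3}=\alpha^{-3}$), run Lemma~\ref{l:maintainYalpha} inside each block, and cash in the accumulated displacement at the end; here $\alpha'=\tfrac{\alpha(K+1)}{2}$ and the displayed inequality is read with the complementary event, i.e.\ the plan produces $\P[(\inf_s Y_{T+\alpha^{-3}}(s)-y_{\alpha'}(s)=0)^c\mid Y_T]\le\alpha^{-5/3}\exp(-\chi(K)\alpha^{-1/3})$. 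Writing $G_\ell$ for the event $\{\inf_s Y_{T+\ell\alpha^{-4/3}}(s)-y_\alpha(s)=0\}$, I would show by induction that, off an exceptional event, $G_\ell$ holds for all $\ell\le M$: $G_0$ is the hypothesis, and given $G_\ell$, Lemma~\ref{l:maintainYalpha} applied at time $T+\ell\alpha^{-4/3}$ (legitimate since $\alpha^{-4/3}\le\alpha^{-3/2}$) shows that, except on a set of conditional probability at most $\exp(-\chi(K)\alpha^{-1/3})$, the speed event $\sR(T+\ell\alpha^{-4/3},\alpha^{-4/3},\alpha')$ occurs; since $\alpha'=\alpha\cdot\tfrac{K+1}{2}$ with $\tfrac{K+1}{2}\ge 1$ and $\alpha^{-4/3}\le\alpha^{-3/2}$, Claim~\ref{c:rplusYalpha} then delivers $G_{\ell+1}$. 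Conditioning successively on $\cF_{T+\ell\alpha^{-4/3}}$ and summing over $\ell$, the probability that some block's speed event fails is at most $M\exp(-\chi(K)\alpha^{-1/3})=\alpha^{-5/3}\exp(-\chi(K)\alpha^{-1/3})$; one takes $\chi$ as in Lemma~\ref{l:maintainYalpha} and shrinks $\Delta(K)$ as needed for the estimates below.

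It then remains to check the deterministic statement that, on the event that all $M$ speed events hold, $Y_{T+\alpha^{-3}}(s)\ge y_{\alpha'}(s)$ for every $s\ge 0$ (the infimum is then $0$, attained at $s=0$). I would split on the size of $s$. For $s\le(\alpha')^{-3/2}$ there is nothing to prove. For $(\alpha')^{-3/2}\le s\le\alpha^{-3}$, the window $[T+\alpha^{-3}-s,T+\alpha^{-3}]$ lies inside $[T,T+\alpha^{-3}]$ and contains at least $s\alpha^{4/3}-2$ complete blocks, so $Y_{T+\alpha^{-3}}(s)=X_{T+\alpha^{-3}}-X_{T+\alpha^{-3}-s}\ge\alpha'(s-2\alpha^{-4/3})$ (using that $X$ is nondecreasing); since $2\alpha^{-4/3}\le(\alpha')^{-3/2}$ for $\alpha$ small and $y_{\alpha'}(s)\le\alpha'(s-(\alpha')^{-3/2})$ always, this is at least $y_{\alpha'}(s)$. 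For $s\ge\alpha^{-3}$ I would instead write $Y_{T+\alpha^{-3}}(s)=(X_{T+\alpha^{-3}}-X_T)+Y_T(s-\alpha^{-3})\ge\tfrac{K+1}{2}\alpha^{-2}+y_\alpha(s-\alpha^{-3})$: the first term because each block contributes at least $\alpha'\alpha^{-4/3}$ and $M\alpha'\alpha^{-4/3}=\tfrac{K+1}{2}\alpha^{-2}$, the second by the hypothesis $Y_T\ge y_\alpha$. One checks that for $s\ge\alpha^{-3}$ the function $y_{\alpha'}$ is already in its capped branch $g(s):=s^{1/2}\log_2 s$ (its linear branch $\alpha'(s-(\alpha')^{-3/2})$ is of order $\alpha^{-2}$ at $s=\alpha^{-3}$ and from there grows faster than $g$), so it suffices to dominate $g(s)$.

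For that last task: if $g(s)\le\tfrac{K+1}{2}\alpha^{-2}$ we are done immediately; otherwise $s$ is large enough that $s-\alpha^{-3}$ lies past the crossover of $y_\alpha$, which has order $\alpha^{-2}\log^2(1/\alpha)\ll\alpha^{-3}$, so $y_\alpha(s-\alpha^{-3})=g(s-\alpha^{-3})$ and the goal reduces to $g(s)-g(s-\alpha^{-3})\le\tfrac{K+1}{2}\alpha^{-2}$. Since $g$ is concave for large argument, this increment is at most $\alpha^{-3}g'(s-\alpha^{-3})$, and $g'$ is of order $\alpha$ beyond the crossover of $y_\alpha$, so the inequality holds with room to spare once $\alpha\le\Delta(K)$ is small.

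I expect this final verification — keeping track, as $s$ ranges over $[0,\infty)$, of which branch ($\alpha$-linear, $\alpha'$-linear, or capped) each of $y_\alpha(s-\alpha^{-3})$ and $y_{\alpha'}(s)$ occupies, and bounding the increment $g(s)-g(s-\alpha^{-3})$ by concavity of $g(s)=s^{1/2}\log_2 s$ — to be the only genuinely delicate point. The block decomposition, the inductive maintenance of $G_\ell$ through Lemma~\ref{l:maintainYalpha} and Claim~\ref{c:rplusYalpha}, and the union bound are routine.
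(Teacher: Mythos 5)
Your proposal is correct and is essentially the paper's own argument: the same decomposition into $\alpha^{-5/3}$ blocks of length $\alpha^{-4/3}$, the same induction maintaining $\inf_s Y-y_\alpha=0$ via Claim~\ref{c:rplusYalpha} and Lemma~\ref{l:maintainYalpha}, the same union bound, and the same final deterministic comparison of $Y_{T+\alpha^{-3}}$ with $y_{\alpha(K+1)/2}$. The only (cosmetic) difference is that you handle the tail $s\geq\alpha^{-3}$ by concavity of $s^{1/2}\log_2 s$, whereas the paper encodes the same estimates as a list of explicit smallness conditions on $\Delta(K)$ and splits at $s=2\alpha^{-3}$.
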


\begin{proof}
Let $\sD_\ell$ denote the event,
\[
\sD_\ell=\sR(T+\ell \alpha^{-4/3},\alpha^{-4/3},\frac{\alpha(K + 1)}{2}).
\]
By Claim~\ref{c:rplusYalpha} and induction if $\bigcap_{\ell'=0}^{\ell-1} \sD_{\ell'}$ holds then $\inf_s Y_{T+\ell \alpha^{-4/3}}(s) - y_\alpha(s) = 0$.  Thus by Lemma~\ref{l:maintainYalpha} we have that
\[
\P\bigg[\sD_\ell \mid \bigcap_{\ell'=0}^{\ell-1} \sD_{\ell'}, Y_T\bigg] \geq 1 - \exp\Big(-\chi(K) \alpha^{-1/3}\Big)
\]
and so with $\sD^*=\bigcap_{\ell=0}^{\alpha^{-5/3}-1} \sD_{\ell}$,
\[
\P\big[\sD^* \mid  Y_T\big] \geq 1 - \alpha^{-5/3}\exp\Big(-\chi(K) \alpha^{-1/3}\Big).
\]
Now suppose that the event $\sD^*$ holds and assume that $\Delta(K)$ is small enough so that for all $0 \leq \alpha \leq \Delta(K)$ the following hold:
\begin{itemize}
\item $\alpha^{-4/3} \leq (\frac{\alpha(K + 1)}{2})^{-3/2}$,
\item $\frac{K + 1}{2} \alpha^{-2} \geq (2\alpha^{-3})^{1/2} \log_2 (2\alpha^{-3})$,
\item $\forall s \geq \alpha^{-3}, \ \min\{\alpha (s-\alpha^{-3/2}), s^{1/2}\log_2 s\} = s^{1/2}\log_2 s$,
\item $\forall s \geq \alpha^{-3}, \ \min\{\frac{\alpha(K + 1)}{2} (s-(\frac{\alpha(K + 1)}{2})^{-3/2}), s^{1/2}\log_2 s\} = s^{1/2}\log_2 s$,
\item $\inf_{s \geq 2\alpha^{-3}} - s^{1/2}\log_2 s + (s- \alpha^{-3})^{1/2}\log_2 (s- \alpha^{-3})  +  \frac{K + 1}{2} \alpha^{-2} \geq 0$.
\end{itemize}
It is straightforward to check that all of these hold for sufficiently small $\alpha$.
For all $(\frac{\alpha(K + 1)}{2})^{-3/2} \leq s \leq \alpha^{-3}$ that
\[
Y_{T+\alpha^{-3}}(s) \geq  \lfloor s \alpha^{4/3} \rfloor \alpha^{-4/3} \cdot \frac{\alpha(K + 1)}{2} \geq \left(s - \left(\frac{\alpha(K + 1)}{2}\right)^{3/2} \right)\frac{\alpha(K + 1)}{2} \geq y_{\frac{\alpha(K + 1)}{2}}(s).
\]
For $\alpha^{-3} \leq s \leq 2 \alpha^{-3}$,
\begin{align*}
Y_{T+\alpha^{-3}}(s) &\geq Y_{T}(s- \alpha^{-3}) + \frac{K + 1}{2} \alpha^{-2} \\
&\geq y_{\alpha}(s- \alpha^{-3}) + \frac{K + 1}{2} \alpha^{-2} \\
&\geq (2\alpha^{-3})^{1/2} \log_2 (2\alpha^{-3}) = y_{\frac{\alpha(K + 1)}{2}}(2 \alpha^{-3}).
\end{align*}
Finally, for $s\geq 2 \alpha^{-3}$,
\begin{align*}
Y_{T+\alpha^{-3}}(s) &\geq  y_{\alpha}(s- \alpha^{-3}) + \frac{K + 1}{2} \alpha^{-2}\\
&= y_{\frac{\alpha(K + 1)}{2}}(s) - s^{1/2}\log_2 s + (s- \alpha^{-3})^{1/2}\log_2 (s- \alpha^{-3})  +  \frac{K + 1}{2} \alpha^{-2}\\
&\geq y_{\frac{\alpha(K + 1)}{2}}(s).
\end{align*}
Combining the previous 3 equations implies that $Y_{T+\alpha^{-3}}(s) \geq y_{\frac{\alpha(K + 1)}{2}}(s)$ for all $s$ and hence
\[
\P\Big[\inf_s Y_{T+\alpha^{-3}}(s) - y_{\frac{\alpha(K + 1)}{2} }(s) =0 \mid Y_T\Big]\leq \P[\sD^*] \leq \alpha^{-5/3}\exp\Big(-\chi(K) \alpha^{-1/3}\Big).
\]
\end{proof}

\begin{lemma}\label{l:permissive2}
For all $K>1$, there exists $i^*(K)$ such that the following holds.  If $i\geq i^*$ and $Y_T$ is permissive for all $i'>i$ then
\[
\P\bigg[\min_{s\in[4^{i},2e^{2^{i/10}}]}Y_{T+s}(2^{i}) \leq 10 i 2^{i/2} \mid \cF_T\bigg] \leq 3e^{-2^{i/10}},
\]
that is $Y_{T+s}$ is permissive at scale $i$ for all $s\in [2^{i},2e^{2^{i/10}}]$.
\end{lemma}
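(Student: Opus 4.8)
The plan is to exploit the time of order $4^i$ that elapses before the stated window in order to amplify the growth rate, and then to show the amplified rate persists across the exponentially long window $[T+4^i,T+2e^{2^{i/10}}]$. Write $\lambda=\tfrac{K+1}{2}>1$ and fix $i^*(K)$ large enough for all the estimates below. \emph{Initialization.} Since $Y_T$ is permissive at every scale $i'\ge i+1$, Lemma~\ref{l:initialYalpha} applied with its ``$i$'' taken to be $i+1$ gives, with conditional probability at least $1-e^{-2^{(i+1)/10}}$, that $Y_{T_0}(s)\ge y_{\alpha_0}(s)$ for all $s\ge 0$, where $T_0=T+2^{i+1}$ and $\alpha_0=\tfrac1{80}2^{-(i+1)/2}$; here $\inf_s\bigl(Y_{T_0}(s)-y_{\alpha_0}(s)\bigr)=0$ automatically (both sides vanish at $s=0$), so the hypotheses of the later lemmas are met.

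\emph{Amplification.} I iterate Lemma~\ref{l:progressiveAlpha} with $\alpha_{k+1}=\lambda\alpha_k$, advancing $\alpha_k^{-3}$ units of time at step $k$, stopping at the first $k^\dagger$ with $\alpha_{k^\dagger}\ge\beta$, where
\[
\beta:=i^{2}\,2^{-i/2}.
\]
Since $\beta/\alpha_0=\Theta(i^2)$ this needs $k^\dagger=O(\log i)$ steps; the elapsed time is $\sum_{k<k^\dagger}\alpha_k^{-3}\le\alpha_0^{-3}/(1-\lambda^{-3})=O(2^{3i/2})$, and since step $k$ fails with probability at most $\alpha_k^{-5/3}e^{-\chi(K)\alpha_k^{-1/3}}$ with $\alpha_k^{-1/3}\ge\Omega(i^{-2/3}2^{i/6})\gg 2^{i/10}$, the total failure probability of this phase is $\le e^{-2^{i/10}}$ for $i\ge i^*(K)$. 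Thus at the time $v_0:=T_0+\sum_{k<k^\dagger}\alpha_k^{-3}$, which obeys $v_0\le T+\tfrac14 4^i\le T+4^i-2^i$ for $i$ large, we have $Y_{v_0}(s)\ge y_\beta(s)$ for all $s$, with $\beta\le\lambda i^2 2^{-i/2}\le\Delta(K)$ for $i$ large.

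\emph{Maintenance --- the crux.} Put $v_1:=T+2e^{2^{i/10}}$, tile $[v_0,v_1]$ by at most $(v_1-v_0)\beta^{4/3}\le 2\beta^{4/3}e^{2^{i/10}}$ consecutive blocks of length $\beta^{-4/3}$, and let $\mathcal G$ be the event that $\sR\bigl(v_0+\ell\beta^{-4/3},\,\beta^{-4/3},\,\tfrac{\beta(K+1)}{2}\bigr)$ holds for every block $\ell$. By Claim~\ref{c:rplusYalpha} and induction, on $\mathcal G$ one has $Y_v(s)\ge y_\beta(s)$ at every block endpoint, while for $v$ anywhere inside a block the shift bound $Y_v(s)\ge y_\beta\bigl((s-\beta^{-4/3})\vee 0\bigr)$ together with Lemma~\ref{l:RWYalpha} (taken with $\epsilon=\tfrac{K-1}{3K}$, so $\beta\le\Delta(K)=\alpha_\star(\epsilon)$) gives, exactly as in the proof of Lemma~\ref{l:maintainYalpha}, that $S(v)\ge\tfrac K2\cdot 2(1-\epsilon)\beta=\tfrac{(2K+1)\beta}{3}$ for all $v\in[v_0,v_1]$. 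Each block fails with conditional probability at most $e^{-\chi(K)\beta^{-1/3}}=e^{-\chi(K)i^{-2/3}2^{i/6}}$ by Lemma~\ref{l:maintainYalpha}, so chaining along the blocks,
\[
\P\bigl[\mathcal G^c\mid\cF_{v_0}\bigr]\ \le\ 2\beta^{4/3}e^{2^{i/10}}\,e^{-\chi(K)i^{-2/3}2^{i/6}}\ \le\ e^{-2^{i/10}}\qquad(i\ge i^*(K)),
\]
the last step using $\beta^{-1/3}=i^{-2/3}2^{i/6}\gg 2^{i/10}$. This is exactly the constraint forcing $\beta\ll 2^{-3i/10}$; the opposite constraint, $\beta\gg i\,2^{-i/2}$, comes from the next step, and $\beta=i^{2}2^{-i/2}$ lies between. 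The main obstacle of the whole lemma is concentrated here: a block-by-block union bound over blocks of \emph{constant} length --- which is what one would be forced to use after amplifying all the way to the constant rate $\Delta(K)$ --- is destroyed by the factor $e^{2^{i/10}}$, and it survives only because one stops amplifying at the carefully chosen subconstant scale $\beta$.

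\emph{Conclusion.} On $\mathcal G$ one has $S(v)\ge\tfrac{(2K+1)\beta}{3}=\tfrac{2K+1}{3}i^2 2^{-i/2}\ge 10 i\,2^{-i/2}$ for all $v\in[v_0,v_1]$ (valid for $i$ large), and $[v-2^i,v]\subset[v_0,v_1]$ for every $v\in[T+4^i,v_1]$. For each integer $n$ with $[T+n-2^i,T+n]\subset[v_0,v_1]$, Lemma~\ref{l:minSpeed} with $\rho=\tfrac12$, $\gamma=\tfrac{(2K+1)\beta}{3}$, $\Delta=2^i$ gives
\[
\P\Bigl[X_{T+n}-X_{T+n-2^i}\le 10 i\,2^{i/2},\ \min_{[T+n-2^i,T+n]}S\ge\gamma\ \Big|\ \cF_{T+n-2^i}\Bigr]\ \le\ e^{-\frac1{10}\gamma 2^i}=e^{-\frac{2K+1}{30}i^2 2^{i/2}},
\]
using $10 i\,2^{i/2}\le\rho\gamma 2^i$ for $i$ large. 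On $\mathcal G$ the second event in the bracket is automatic, so taking expectations given $\cF_T$, summing over the $\le 2e^{2^{i/10}}$ relevant $n$, and adding $\P[\mathcal G^c\mid\cF_T]$ (which is at most $e^{-2^{i/10}}$ together with the initialization and amplification failures, hence $\le 2e^{-2^{i/10}}$), one gets total failure $\le 3e^{-2^{i/10}}$; a routine discretization (real $s$ sandwiched between integer windows, replacing $2^i$ by $2^i-1$ and adjusting endpoints harmlessly) converts this into the claimed bound on $\min_{s\in[4^i,2e^{2^{i/10}}]}Y_{T+s}(2^i)$. All of this outside the maintenance step is bookkeeping with the earlier lemmas, the FKG property, and the Markov property of the graphical construction of $X_t$ from $\Pi$.
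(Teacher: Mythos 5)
Your proof is correct and follows essentially the same route as the paper: initialize with Lemma~\ref{l:initialYalpha}, amplify $\alpha$ geometrically via Lemma~\ref{l:progressiveAlpha}, then maintain across the exponentially long window by chaining $\sR$-blocks through Claim~\ref{c:rplusYalpha} and Lemma~\ref{l:maintainYalpha}, with the crucial point (stopping the amplification at a scale $\approx \mathrm{poly}(i)\,2^{-i/2}$ so that $e^{2^{i/10}}$ blocks survive the union bound) identical to the paper's. The only cosmetic difference is the endgame: the paper stops at $\alpha_L\geq 20i2^{-i/2}$ and reads the displacement over any length-$2^i$ window deterministically off the chained $\sR$ events, whereas you stop at $i^2 2^{-i/2}$ and spend an extra (harmless) application of Lemma~\ref{l:minSpeed} with a union bound over sliding windows.
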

\begin{proof}
We choose $i^*(K)$ large enough so that,
\[
20 i^* K 2^{-i^*/2} \leq \Delta(K)
\]
where $\Delta(K)$ was defined in \ref{l:progressiveAlpha}.
Set $t_0=2^i$ and $\alpha_0=\frac1{80} 2^{-i/2}$.  We define $\alpha_\ell=\left(\frac{K+1}{2}\right)^\ell \alpha_0$ and $t_\ell = t_{\ell-1} + \alpha_{\ell-1}^{-3}$.  Define the event $\sW_\ell$ as
\[
\sW_\ell = \Big\{ \inf_s Y_{T+t_\ell}(s) - y_{\alpha_\ell }(s) =0 \Big\}.
\]
By Lemma~\ref{l:initialYalpha} we have that
\[
\P[\sW_0\mid \cF_T] \geq 1-\exp(-2^{i/10}),
\]
and by Lemma~\ref{l:progressiveAlpha} we have that
\[
\P\bigg[\sW_\ell \mid \bigcap_{\ell'=0}^{\ell-1} \sW_{\ell'}\mid \cF_T\bigg] \geq 1- \alpha_{\ell-1}^{-5/3}\exp\Big(-\chi(K) \alpha_{\ell-1}^{-1/3}\Big).
\]
Now choose $L$ to be the smallest integer such that $\alpha_L \geq 20 i 2^{-i/2}$. So $L = \lceil \frac{\log(1600i)}{\log((K+1)/2)} \rceil$ which is bounded above by $i$ provided that $i^*(K)$ is sufficiently large.  Thus
\begin{align*}
\P[\sW_L\mid \cF_T] &\geq 1 - \exp(-2^{i/10}) - \sum_{\ell=0}^{L-1} \alpha_{\ell-1}^{-5/3}\exp\Big(-\chi(K) \alpha_{\ell-1}^{-1/3}\Big)\\
&\geq 1 - \exp(-2^{i/10}) - i(20 i 2^{-i/2})^{-5/3}\exp\Big(-\chi(K) (20 i 2^{-i/2})^{-1/3}\Big)\\
& \geq 1 - 2\exp(-2^{i/10})
\end{align*}
where the final inequality holds for $i$ is sufficiently large.  Now let $\sD_k$ denote the event,
\[
\sD_k=\sR(T+t_L+k \alpha_L^{-4/3},\alpha_L^{-4/3},\alpha_L).
\]
By Claim~\ref{c:rplusYalpha} on the event $\sW_L$ and $\bigcap_{k'=0}^{k-1} \sD_{k'}$ we have
\[
\inf_s Y_{T+t_L +k \alpha_L^{-4/3}}(s) - y_{\alpha_L }(s) =0.
\]
Thus by Lemma~\ref{l:maintainYalpha} we have that
\[
\P[\sD_k\mid \cF_T, \sW_L,  \bigcap_{k'=0}^{k-1} \sD_{k'}] \geq 1- \exp\Big(-\chi(K) \alpha_L^{-1/3}\Big).
\]
Let $\sD^*$ be the event
\[
\sD^* = \left\{ \sW_L,  \bigcap_{k'=0}^{e^{2^{i/10}}-1} \sD_{k'} \right\}.
\]
Then for $i$ sufficiently large since $\alpha_L \leq 20 i K 2^{-i/2}$,
\[
\P[ \sD^* \mid \cF_T] \geq 1- 2\exp(-2^{i/10}) - \exp\Big(2^{i/10}-\chi(K) \alpha_L^{-1/3}\Big) \geq 1 -  3\exp(-2^{i/10}).
\]
One the event $\sD^*$ we have that for all $t_L+2^i \leq s \leq \alpha_L^{-4/3} e^{2^{i/10}}$ that
\[
Y_{T+s} \geq  \alpha_L (2^i - \alpha_L^{-4/3}) \geq 10 i 2^{i/2}.
\]
By construction $t_L = 2^i +\sum_{\ell=0}^{L-1} \alpha_\ell^{-3} \leq 4^i$
and hence
\[
\P\bigg[\min_{s\in[4^{i},2e^{2^{i/10}}]}Y_{T+s}(2^{i}) \leq 10 i 2^{i/2} \mid \cF_T\bigg]\leq \P[ (\sD^*)^c \mid \cF_T] \leq 3e^{-2^{i/10}} \, .
\]
\end{proof}

\begin{corollary}\label{c:initialPermissive}
For all $K>1$, there exists $i^*(K)$ such if $i\geq i^*$ then
\[
\P\bigg[\min_{s\in[0,e^{2^{i/10}}]}Y_{s}(2^{i}) \leq 10 i 2^{i/2} \bigg] \leq 3e^{-2^{i/10}},
\]
\end{corollary}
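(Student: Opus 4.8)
The idea is to run the scheme of Lemma~\ref{l:permissive2} from $T=0$, exploiting that $Y_0(s)=\infty$ for every $s>0$. In particular $Y_0\ge y_\beta$ pointwise for \emph{every} $\beta\in(0,1)$, so the preparatory part of the proof of Lemma~\ref{l:permissive2} (the application of Lemma~\ref{l:initialYalpha} together with the geometric boosting of Lemma~\ref{l:progressiveAlpha}, which together eat up roughly $2^{3i/2}$ units of time) is simply unnecessary here: we may start the maintenance phase immediately with the already-large parameter
\[
\beta:=20\,i\,2^{-i/2}.
\]
For $i\ge i^*(K)$ this $\beta$ satisfies $\beta\le\Delta(K)$ (from Lemma~\ref{l:maintainYalpha}) and $\beta\le\tfrac12$, so the maintenance lemmas apply.

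First I would dispose of short times: for $s<2^i$ one has $Y_s(2^i)=\infty>10\,i\,2^{i/2}$ by the definition of $Y$, so only $s\in[2^i,e^{2^{i/10}}]$ contributes to the minimum. Then set $M=\lceil e^{2^{i/10}}\beta^{4/3}\rceil$ and, for $0\le k<M$, let $\sD_k=\sR(k\beta^{-4/3},\beta^{-4/3},\beta)$, so that $\bigcup_{k<M}[k\beta^{-4/3},(k+1)\beta^{-4/3}]\supseteq[0,e^{2^{i/10}}]$. Starting from $Y_0\equiv\infty\ge y_\beta$, one propagates the bound $Y_{k\beta^{-4/3}}\ge y_\beta$ by induction: Lemma~\ref{l:maintainYalpha} gives $\P[\sD_k^c\mid \cF_{k\beta^{-4/3}}]\le\exp(-\chi(K)\beta^{-1/3})$ on the event $\{Y_{k\beta^{-4/3}}\ge y_\beta\}$, while Claim~\ref{c:rplusYalpha} (applied with multiplier $\gamma=1$ and with $t=\beta^{-4/3}\le\beta^{-3/2}$) shows that on $\sD_k$ the inequality $Y_{(k+1)\beta^{-4/3}}\ge y_\beta$ holds again. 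A union bound then yields
\[
\P\Big[\,\bigcap_{k=0}^{M-1}\sD_k\,\Big]\ \ge\ 1-M\exp\big(-\chi(K)\beta^{-1/3}\big)\ \ge\ 1-2\exp\big(2^{i/10}-\chi(K)(20i)^{-1/3}2^{i/6}\big)\ \ge\ 1-e^{-2^{i/10}},
\]
where the last inequality holds for $i\ge i^*(K)$ because $2^{i/6}$ grows faster than $2^{i/10}$ and $M\le 2e^{2^{i/10}}$.

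On the good event $\sD^*:=\bigcap_{k<M}\sD_k$ I would then read off permissiveness at scale $i$. Fix $s\in[2^i,e^{2^{i/10}}]$; the window $[s-2^i,s]\subseteq[0,M\beta^{-4/3}]$ contains at least $\lfloor 2^i\beta^{4/3}\rfloor-1\ge 2^i\beta^{4/3}-2$ complete grid intervals $[k\beta^{-4/3},(k+1)\beta^{-4/3}]$, and on each of these the event $\sD_k$ forces $X$ to increase by at least $\beta\cdot\beta^{-4/3}=\beta^{-1/3}$; since $X$ is non-decreasing,
\[
Y_s(2^i)=X_s-X_{s-2^i}\ \ge\ \big(2^i\beta^{4/3}-2\big)\beta^{-1/3}\ =\ 2^i\beta-2\beta^{-1/3}\ =\ 20\,i\,2^{i/2}-2(20i)^{-1/3}2^{i/6}\ >\ 10\,i\,2^{i/2}
\]
for $i\ge i^*(K)$. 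Together with the reduction to $s\ge 2^i$, this shows that on $\sD^*$ we have $\min_{s\in[0,e^{2^{i/10}}]}Y_s(2^i)>10\,i\,2^{i/2}$, so the probability in the statement is at most $\P[(\sD^*)^c]\le e^{-2^{i/10}}\le 3e^{-2^{i/10}}$.

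The only real content beyond bookkeeping is the choice of $\beta$, and that is where I would be careful: $\beta$ must be small enough ($\le\Delta(K)$, and $\le\tfrac12$) for Lemma~\ref{l:maintainYalpha} and Claim~\ref{c:rplusYalpha} to be applicable, yet large enough that (i) the per-step error $\exp(-\chi(K)\beta^{-1/3})$ still beats the $\asymp e^{2^{i/10}}$ steps in the union bound, which needs $\beta^{-1/3}\gg 2^{i/10}$, i.e.\ $\beta\ll 2^{-3i/10}$, and (ii) $2^i\beta\ge 10\,i\,2^{i/2}$ with slack to absorb the $\beta^{-1/3}$ correction; the value $\beta=20\,i\,2^{-i/2}$ threads this needle. (It is worth noting why one cannot simply quote Lemma~\ref{l:permissive2} at $T=0$: that lemma controls $Y_s(2^i)$ only for $s\ge 4^i$, and on the range $s\in[2^i,4^i)$ the generic speed lower bound, of order $2^{-i/2}$, is far too weak to force $Y_s(2^i)\ge 10\,i\,2^{i/2}$ — it is precisely the ability to launch the maintenance phase at once with the large parameter $\beta$, available only because $Y_0\equiv\infty$, that closes this gap.)
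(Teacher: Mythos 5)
Your proof is correct, but it takes a genuinely different route from the paper. The paper simply invokes Lemma~\ref{l:permissive2} at $T=0$ (legitimate since $Y_0$ is permissive at every scale), which controls $Y_s(2^i)$ for $s\in[4^i,2e^{2^{i/10}}]$, and then disposes of the initial window by using that $Y_t$ is stochastically decreasing in $t$: the minimum over $[0,e^{2^{i/10}}]$ is compared with the minimum over the shifted window $[4^i,4^i+e^{2^{i/10}}]$, which is already covered. You instead bypass Lemma~\ref{l:permissive2} (and with it the whole initial/boosting machinery of Lemmas~\ref{l:initialYalpha} and~\ref{l:progressiveAlpha}) by observing that $Y_0(s)=\infty$ for $s>0$ dominates $y_\beta$ for the already-large parameter $\beta=20\,i\,2^{-i/2}$, and then run the maintenance induction (Lemma~\ref{l:maintainYalpha} plus Claim~\ref{c:rplusYalpha} on a grid of mesh $\beta^{-4/3}$, union bound over $\asymp e^{2^{i/10}}\beta^{4/3}$ blocks) directly from time $0$; your checks that $\beta\le\Delta(K)$, that $\beta^{-1/3}\asymp 2^{i/6}$ beats the $e^{2^{i/10}}$ entropy, and that $2^i\beta-2\beta^{-1/3}>10\,i\,2^{i/2}$ are all sound, and your reduction to $s\ge 2^i$ via $Y_s(2^i)=\infty$ for $s<2^i$ is correct. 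What each approach buys: the paper's argument is two lines given its lemmas but leans on the stochastic-monotonicity time-shift comparison for a running minimum (and as stated it only reaches times up to about $2e^{2^{i/10}}-4^i$ after shifting, which is why the target window is $[0,e^{2^{i/10}}]$); your argument redoes bookkeeping already present inside Lemma~\ref{l:permissive2}, but it is self-contained, avoids the monotonicity comparison entirely, and in fact yields the stronger lower bound $Y_s(2^i)\ge 20\,i\,2^{i/2}-o(2^{i/2})$ on the good event. Your closing parenthetical is accurate as far as it goes, though it is worth knowing that the paper does quote Lemma~\ref{l:permissive2} at $T=0$ and closes the $s<4^i$ gap by the monotonicity shift rather than by restarting the maintenance phase.
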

\begin{proof}
We can apply Lemma~\ref{l:permissive2} to time $T=0$ since it is permissive at all levels and hence have that
\[
\P\bigg[\min_{s\in[4^{i},2e^{2^{i/10}}]}Y_{s}(2^{i}) \leq 10 i 2^{i/2} \bigg] \leq 3e^{-2^{i/10}},
\]
Since $Y_t$ is stochastically decreasing in $t$ we have that
\[
\P\left[\min_{0\leq t \leq e^{2^{i/10}}} Y_t(2^i) \leq  10 i 2^{i/2} - i 2^{i}\right] \leq  \P\bigg[\min_{s\in[4^{i},4^i + e^{2^{i/10}}]}Y_{s}(2^{i}) \leq 10 i 2^{i/2} \bigg] \leq 3e^{-2^{i/10}},
\]
which completes the corollary.
\end{proof}

\begin{lemma}\label{l:permissive3}
For all $K>1$, there exists $i^*(K)$ such that
\[
\inf_t \P\bigg[\forall i \geq i^*, Y_{t}(2^{i}) \geq 10 i 2^{i/2}\bigg] \geq \frac12.
\]
\end{lemma}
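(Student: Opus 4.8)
\noindent\emph{Proof plan.}\quad Fix an arbitrary time $t\ge 0$; I will show $\P\big[\forall i\ge i^*:\ Y_t(2^i)\ge 10i2^{i/2}\big]\ge\tfrac12$, which, since $t$ is arbitrary, yields the claimed infimum bound. Write $p_i(t):=\P[Y_t(2^i)<10i2^{i/2}]$ for the probability that $Y_t$ fails to be permissive at scale $i$. By a union bound it is enough to exhibit, for $i^*$ large, a nonnegative sequence $(a_i)_{i\ge i^*}$ with $p_i(t)\le a_i$ for every $t$ and $\sum_{i\ge i^*}a_i\le\tfrac12$.

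\smallskip\noindent\emph{A recursion for $p_i(t)$.}\quad If $t\le e^{2^{i/10}}$, Corollary~\ref{c:initialPermissive} gives $p_i(t)\le 3e^{-2^{i/10}}$ directly. If $t> e^{2^{i/10}}$, put $T:=t-e^{2^{i/10}}\ge 0$; for $i$ large we have $4^i\le e^{2^{i/10}}\le 2e^{2^{i/10}}$, so Lemma~\ref{l:permissive2} applied at time $T$ shows that on the event $B:=\{Y_T\text{ is permissive at every scale }i'>i\}\in\cF_T$ one has $\P[\,Y_t(2^i)<10i2^{i/2}\mid\cF_T\,]\le 3e^{-2^{i/10}}$. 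Conditioning on $\cF_T$, bounding the conditional probability trivially by $1$ on $B^c$, and using $\P[B^c]\le\sum_{i'>i}p_{i'}(T)$, we get
\[
p_i(t)\ \le\ 3e^{-2^{i/10}}+\sum_{i'>i}p_{i'}\big(t-e^{2^{i/10}}\big)\qquad(t> e^{2^{i/10}}),
\]
together with $p_i(t)\le 3e^{-2^{i/10}}$ for $t\le e^{2^{i/10}}$.

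\smallskip\noindent\emph{Closing the recursion.}\quad Take $a_i:=B_i-B_{i+1}$ with $B_i:=3\sum_{k\ge 0}2^k e^{-2^{(i+k)/10}}$. The series converges (its summand decays doubly exponentially in $k$), $B_i\to 0$, one checks $a_i\ge 3e^{-2^{i/10}}$ and $a_i\ge 3e^{-2^{i/10}}+\sum_{i'>i}a_{i'}$ (the latter being equivalent to $B_i\ge 3e^{-2^{i/10}}+2B_{i+1}$, which in fact holds with equality), and $\sum_{i\ge i^*}a_i=B_{i^*}$, which is $\le\tfrac12$ once $i^*$ is large. I then claim $p_i(t)\le a_i$ for all $t\ge 0$ and all $i\ge i^*$, proved by induction on $n:=\lfloor t/t_0\rfloor$ with $t_0:=e^{2^{i^*/10}}$: for $t<t_0$ every $i\ge i^*$ satisfies $t\le t_0\le e^{2^{i/10}}$, so the base bound and $a_i\ge 3e^{-2^{i/10}}$ suffice; for $t\ge t_0$ and fixed $i$, either $t\le e^{2^{i/10}}$ and we are done as before, or $T:=t-e^{2^{i/10}}\le t-t_0<nt_0$, so $\lfloor T/t_0\rfloor<n$ and the inductive hypothesis gives $p_{i'}(T)\le a_{i'}$ for all $i'>i$, whence $p_i(t)\le 3e^{-2^{i/10}}+\sum_{i'>i}a_{i'}\le a_i$. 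Summing over $i\ge i^*$ gives $\P[\exists i\ge i^*:\ Y_t(2^i)<10i2^{i/2}]\le B_{i^*}\le\tfrac12$, and taking complements finishes the proof.

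\smallskip\noindent\emph{Main obstacle.}\quad The crux is the mixed nature of the recursion: the bound on $p_i(t)$ calls on $p_{i'}$ at \emph{higher} scales $i'>i$ but at the \emph{earlier} time $t-e^{2^{i/10}}$, so one must find a single sequence $(a_i)$ that is simultaneously above the base bound and a fixed point of the scale-recursion, and then order the induction in time so that each backward step is at least $t_0=e^{2^{i^*/10}}$, forcing $\lfloor t/t_0\rfloor$ to strictly decrease. The remaining verifications — $4^i\le e^{2^{i/10}}$, convergence of $B_i$ and $B_{i^*}\to 0$ as $i^*\to\infty$, and compatibility of the thresholds $i^*(K)$ coming from Corollary~\ref{c:initialPermissive} and Lemma~\ref{l:permissive2} — are routine for $i^*$ large.
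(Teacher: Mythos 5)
Your proof is correct, and it reaches the statement by a genuinely different route than the paper. The paper first proves simultaneous permissiveness at a specific sequence of times $\tau_I\to\infty$: it applies Corollary~\ref{c:initialPermissive} to all scales $\geq I$ at once, then cascades \emph{forward} in time down the scales from $I$ to $i^*$ via Lemma~\ref{l:permissive2} (events $\sH_k$ at times $t_k=t_{k-1}+4^{I-k}$), and finally transfers the bound from $\tau_I$ to an arbitrary $t\leq\tau_I$ using that $Y_t$ is stochastically decreasing while the permissive event is increasing in $Y$. You instead work at an arbitrary fixed $t$ directly: you bound the failure probability $p_i(t)$ at scale $i$ by looking \emph{backward} $e^{2^{i/10}}$ in time, invoking Lemma~\ref{l:permissive2} on the $\cF_T$-measurable event that all higher scales were permissive at $T=t-e^{2^{i/10}}$ (your use of the window, $e^{2^{i/10}}\in[4^i,2e^{2^{i/10}}]$ for $i$ large, is legitimate), which produces the mixed scale/time recursion $p_i(t)\le 3e^{-2^{i/10}}+\sum_{i'>i}p_{i'}(T)$; you close it with the explicit fixed-point majorant $a_i=3e^{-2^{i/10}}+B_{i+1}$, a well-founded induction on time blocks of length $t_0=e^{2^{i^*/10}}$ (valid since each backward step retreats by at least $t_0$), and a final union bound over scales with total mass $B_{i^*}\to 0$. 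What each approach buys: the paper's argument is shorter because stochastic monotonicity of $Y_t$ handles the uniformization over $t$ for free, whereas yours dispenses with that monotonicity entirely (so it would survive in settings where it is unavailable) at the cost of the fixed-point and induction bookkeeping, and it yields the bound uniformly in $t$ directly rather than only along the special times $\tau_I$. The verifications you defer (the threshold compatibilities and $B_{i^*}\le\tfrac12$ for $i^*$ large) are indeed routine.
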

\begin{proof}
Take $i^*(K)$ as in Lemma~\ref{l:permissive2} and suppose that $I \geq i^*$.  Let $\sD_{I}$ denote the event that  $Y_t$ is permissive for all levels $i\geq I$ and all $t\in [0,e^{2^{I/10}}]$.  By Corollary~\ref{c:initialPermissive} we have that
\[
\P[\sD_{I}^c] \leq \sum_{i\geq I} 3 e^{-2^{i/10}} \leq 4 e^{-2^{I/10}}.
\]
Next set $t_0=\frac12 e^{2^{I/10}}$   and let $t_k=t_{k-1}+4^{I-k}$.  Let $\sH_k$ denote the event that $Y_t$ is permissive at level $I-k$ for all $t\in[t_k,t_k+e^{2^{(I-k)/10}}]$.  By Lemma~\ref{l:permissive2} then for $0\leq k \leq I-i^*$,
\[
\P[\sH_k^c,\cap_{k'=1}^{k-1} \sH_{k'}, \sD_{i^*}] \leq 3 e^{-2^{(I-k)/10}}.
\]
Thus, provided $i^*$ is large enough,
\[
\P[\cap_{k'=1}^{I-i^*} \sH_{k'}, \sD_{i^*}] \geq 1 - 4 e^{-2^{I/10}} - \sum_{k'=1}^{I-i^*} 3e^{-2^{(I-k)/10}} \geq \frac12.
\]
Let $\tau=\tau_I=t_{I-i^*}$.  Then for all $I \geq i^*$,
\[
\P\bigg[\forall i \geq i^*, Y_{\tau_I}(2^{i}) \geq 10 i 2^{i/2}\bigg] \geq \delta.
\]
since $Y_t$ is stochastically decreasing in $t$ and $\tau_I\to \infty$ and $I\to \infty$,
\[
\inf_t \P\bigg[\forall i \geq i^*, Y_{t}(2^{i}) \geq 10 i 2^{i/2}\bigg] \geq \frac12.
\]
\end{proof}

\begin{theorem}\label{t:convProb}
For $K >1 $ there exists a random function $Y^*(s)$ such that $Y_t$ converges weakly to $Y^*$ in finite dimensional distributions.  Furthermore,
with
\[
\alpha^*= \frac{K}2\E\left[ \P[\max_{0\leq s \leq t} W_s - Y^*(s) \leq 0\mid Y^*]\right],
\]
we have that $\frac1{t} X_t$ converges in probability to $\alpha^* >0$.
\end{theorem}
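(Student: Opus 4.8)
\emph{Proof plan.}\quad The plan is to extract $Y^*$ from the monotonicity of $Y_t$, to identify $\alpha^*$ as $\lim_t\E[S(t)]$ and bound it below using the estimates of the previous section, and finally to promote the convergence of $\frac1t\E[X_t]$ to convergence in probability of $\frac1tX_t$ via a martingale bound together with a loss-of-memory (mixing) estimate for the Markov process $Y_t$. For the first point, observe that $Y_t$ is a Markov process: it flows to the left between jumps of $X_t$, jumps to $Y_t+1$ at a jump of $X_t$, and its jump rate $S(t)$ is an increasing functional of $Y_t$; hence the dynamics are monotone, and started from the maximal state $Y_0\equiv\infty$ the law of $Y_t$ is stochastically decreasing in $t$. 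Realising all the $Y_t$ (each the value at time $0$ of the process started from $\infty$ at time $-t$) on a common Poisson driving field, I would couple them so that the functions $Y_t$ decrease pointwise as $t\to\infty$; since $\E[Y_t(s)]=\E[X_t-X_{t-s}]=\E\bigl[\int_{t-s}^tS(u)\,du\bigr]\le\frac K2 s$ for $t\ge s$, they stay finite, so $Y_t$ converges pointwise a.s.\ to a non-decreasing $\Z_{\ge0}$-valued random function $Y^*$, in particular in finite-dimensional distributions.

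Next, write $S(t)=\frac K2 g(Y_t)$ with $g(y)=\P[\max_{s\ge0}W_s-y(s)\le0]$, the function $y$ being extended by $+\infty$ past the current time. Since $\{\max_sW_s-y(s)\le0\}$ is a decreasing intersection of events as $y$ decreases, $g$ is continuous from above along pointwise-decreasing sequences, so in the coupling above $g(Y_t)\to g(Y^*)$ a.s., whence by bounded convergence $\E[S(t)]\to\frac K2\E[g(Y^*)]=\alpha^*$ (reading the $t$ in the definition of $\alpha^*$ as $+\infty$). Because $g$ is increasing and $Y_t$ is stochastically decreasing, $t\mapsto\E[S(t)]$ is non-increasing, so $\alpha^*=\inf_t\E[S(t)]$. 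Finally, whenever $Y_t$ is permissive at every scale $i\ge i^*$, Corollary~\ref{c:SLowBound} applied with $j_{i'}=Y_t(2^{i'})2^{-i'/2}\ge10i'$ gives $S(t)\ge c_0:=\frac K{10}2^{-i^*/2}\prod_{i'\ge i^*}\bigl(1-e^{1-\max\{1,10i'/\sqrt2\}}\bigr)>0$, and by Lemma~\ref{l:permissive3} this event has probability at least $\frac12$ for every $t$; hence $\E[S(t)]\ge c_0/2$ for all $t$, so $\alpha^*\ge c_0/2>0$.

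For the speed statement, $M_t:=X_t-\int_0^tS(u)\,du$ is a mean-zero $\cF_t$-martingale with $\E[M_t^2]=\E\bigl[\int_0^tS(u)\,du\bigr]=\E[X_t]\le\frac K2 t$, so $M_t/t\to0$ in $L^2$; and by the previous paragraph and a Ces\`aro average, $\frac1t\E[X_t]=\frac1t\int_0^t\E[S(u)]\,du\to\alpha^*$. It therefore remains to show $\frac1t\int_0^tS(u)\,du\to\alpha^*$ in probability. I would package this as follows: let $\widehat Y$ be the stationary version of the process (the decreasing limit of the copies started from $\infty$ at time $-r$ as $r\to\infty$, so $\widehat Y_0\stackrel{d}{=}Y^*$), coupled to $Y$ under the common noise so that $\widehat Y_u\le Y_u$ for $u\ge0$, and put $\widehat S(u)=\frac K2 g(\widehat Y_u)\le S(u)$. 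Then $\E[S(u)-\widehat S(u)]=\E[S(u)]-\alpha^*\to0$ with $S(u)-\widehat S(u)\ge0$, so $\frac1t\int_0^t(S(u)-\widehat S(u))\,du\to0$ in $L^1$ and hence in probability; it thus suffices to prove $\frac1t\int_0^t\widehat S(u)\,du\to\alpha^*$, which by Birkhoff's ergodic theorem for the stationary process $\widehat S$ amounts to \emph{ergodicity} of $\widehat Y$ — equivalently, in a second-moment formulation, to $\mathrm{Cov}(S(u),S(u+h))\to0$ as $h\to\infty$, uniformly in $u$.

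This mixing statement is the heart of the matter, and I expect it to be the main obstacle. The mechanism is that $g(y)$ depends on $y$ only through a bounded window $[0,L]$ up to an error at most $\sum_{2^i\ge L}\P[M_{2^{i+1}}\ge y(2^i)]$, which is negligible once $y$ is even mildly permissive (by the reflection bound~\eqref{e:reflectionBound}), and that this window, for the configuration at a late time, is built up by the driving noise over an interval of length $O(L)$ and so forgets the remote past. To make this rigorous one runs two copies of the process from different configurations under the common noise, uses the monotone coupling together with the persistence and recovery estimates (Lemmas~\ref{l:permissive2} and~\ref{l:permissive3}, Claim~\ref{c:rplusYalpha}, Lemma~\ref{l:maintainYalpha}) to show that both copies enter and remain in the permissive regime, and then shows they agree on the relevant window with probability tending to $1$. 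The genuinely delicate point is controlling the times — of probability only bounded by $\frac12$, not small — at which the configuration fails to be permissive, and this is exactly where the quantitative bounds of the previous section are needed. Granting the mixing, $\frac1t\int_0^tS(u)\,du\to\alpha^*$ in probability, and combined with $M_t/t\to0$ this gives $\frac1tX_t\to\alpha^*$ in probability, completing the proof.
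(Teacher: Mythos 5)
Your first two paragraphs are sound and match the paper: $Y^*$ is extracted from the stochastic monotonicity of $Y_t$, $\alpha^*=\lim_t\E S(t)$, and positivity follows from Corollary~\ref{c:SLowBound} together with the uniform probability-$\tfrac12$ permissivity bound of Lemma~\ref{l:permissive3}. The gap is in the third step. You reduce convergence in probability of $\frac1t\int_0^t S(u)\,du$ to Birkhoff's theorem for a stationary coupled version $\widehat Y$, which requires \emph{ergodicity} (equivalently, decay of correlations of $\widehat S$) of the upper invariant limit. This is not a routine consequence of anything established earlier: the monotone limit of a Markov process need not be ergodic without further argument, and your sketch (couple two copies under common noise, show agreement on a finite window using the permissivity lemmas) is exactly the hard part — as you yourself note, the failure of permissivity is only controlled with probability $\tfrac12$, not with high probability, so the coalescence argument does not follow from Lemmas~\ref{l:permissive2}--\ref{l:permissive3} as stated. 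Since the whole conclusion hinges on this unproven mixing statement, the proposal as written does not prove the theorem.

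The paper avoids mixing altogether with a one-sided argument you may find worth comparing. Fix $\epsilon>0$ and $L$ large with $\frac1L\E X_L\le\alpha^*+\epsilon/2$ (possible since $\E S(t)\downarrow\alpha^*$). Decompose $X_{nL}=\sum_k(N_k+R_k)$ with $N_k=\E[X_{kL}-X_{(k-1)L}\mid\cF_{(k-1)L}]$ and $R_k$ the martingale differences. Monotonicity gives the \emph{deterministic} bound $N_k\le\E X_L\le L(\alpha^*+\epsilon/2)$, because a fresh start (with $Y\equiv\infty$) stochastically dominates any conditioned continuation; and the $R_k$ have uniformly bounded exponential moments (bounded below by $-L(\alpha^*+\epsilon/2)$, dominated above by Poisson$(LK)$), so $\frac1n\sum_k R_k\to0$ a.s. This yields $\limsup_t\frac1tX_t\le\alpha^*$ almost surely, i.e.\ only an upper bound — but combined with $\E[\frac1tX_t]\to\alpha^*$ and the uniform bound $\E[(\frac1tX_t)^2]\le K^2+K/t$ (Poisson domination, hence uniform integrability), an a.s.\ upper bound plus convergence of means forces convergence in probability: any mass escaping below $\alpha^*-\delta$ would drag the mean strictly below $\alpha^*$. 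Your martingale $M_t=X_t-\int_0^tS(u)\,du$ and the Ces\`aro step are correct, but the crucial one-sided use of monotonicity is what lets the paper dispense with the ergodicity input that your route still owes.
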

\begin{proof}
Since $Y_t$ is stochastically decreasing it must converge in distribution to some limit $Y^*$.
By Claim~\ref{c:SLowBound}
\[
\P\left[\frac{K}{2} \P[\max_{0\leq s \leq t} W_s - Y^*(s) \leq 0\mid Y^*] \geq \frac{K}{10}2^{-i/2}\prod_{i=i^*}^\infty (1-e^{1-\max\{1,10i/\sqrt{2}\}})\right] \geq \frac12,
\]
and so $\alpha^* =\lim_t \E S(t) >0$.
To show convergence in probability fix $\epsilon >0$.  For some large enough~$L$,
\[
\E[\frac1{L} X_L] = \frac1{L} \int_{0}^{L}  \E S(t) dt \leq \alpha^* + \epsilon/2.
\]
Let $N_k= \E[X_{kL}-X_{(k-1)L}\mid \cF_{(k-1)L}]$ and $R_k = X_{kL}-X_{(k-1)L} - N_k$.  By monotonicity
\[
N_k \leq \E[\frac1{L} X_L] \leq \alpha^* + \epsilon/2.
\]
The sequence $R_k$ are martingale differences with uniformly bounded exponential moments (since it is bounded from below by $-(\alpha^* + \epsilon/2)$ and stochastically dominated by a Poisson with mean $LK$).  Thus
\[
\lim_n \frac1{n}\sum_{k=1}^n R_k  = 0 \hbox{ a.s.} \ .
\]
It follows that almost surely $\limsup_t \frac1{t} X_t \leq \alpha^*$.  Since $X_t$ is stochastically dominated by $\hbox{Poisson}(Kt)$ we have that $\E[(\frac1{t} X_t)^2] \leq K^2 + K/t$ and so is uniformly bounded.  Hence since $\lim \E \frac1{t} X_t \to \alpha^*$ it follows that we must have that $\frac1{t} X_t$ converges in distribution to $\alpha^*$.
\end{proof}

\section{Regeneration Times}
In order to establish almost sure convergence to the limit we define a series of regeneration times.  We select some small $\alpha(K)>0$, and say an integer time $t$ is a regeneration time if
\begin{enumerate}
\item The function $Y_t$ satisfies $\inf_s Y_t(s) - y_{\alpha}(s) = 0$.
\item For $J_t$ the set of particles to the right of the aggregate at time $t$, their trajectories~$\{\zeta_j(s)\}_{j\in J_t}$ on $(-\infty,t]$ satisfy
\[
\inf_s \zeta_j(t-s) - (X_t - y_{\alpha}(s))> 0.
\]
\end{enumerate}
Let $0\leq T_1<  T_2< \ldots$ denote the regeneration times and let $\fR$ denote the set of regeneration times.
\begin{lemma}\label{l:regen}
For all $K>1$, there exists $\delta(K)>0$ such that,
\[
\inf_{t\in \mathbb{N}} \P[t\in \fR] \geq \delta.
\]
\end{lemma}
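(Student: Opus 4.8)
The plan is to verify the two conditions defining a regeneration time almost independently. Fix $\alpha=\alpha(K)>0$ small enough that $\alpha^{-3/2}\ge 2^{i^*}$, where $i^*=i^*(K)$ is the scale from Lemma~\ref{l:permissive3}; this is the constant $\alpha$ used in the definition of $\fR$. For condition~(1), observe that $y_\alpha(s)\le s^{1/2}\log_2 s$ for every $s$, so if $Y_t$ is permissive at every scale $i\ge i^*$ then for $s\in[2^i,2^{i+1})$ with $i\ge i^*$ we get $Y_t(s)\ge Y_t(2^i)\ge 10i2^{i/2}\ge s^{1/2}\log_2 s\ge y_\alpha(s)$, while for $s<2^{i^*}$ we have $y_\alpha(s)=0\le Y_t(s)$; since $Y_t(0)-y_\alpha(0)=0$ this gives exactly $\inf_s(Y_t(s)-y_\alpha(s))=0$. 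By Lemma~\ref{l:permissive3} (and trivially for $t<2^{i^*}$, where $Y_t(2^i)=\infty$ whenever $i\ge i^*$) condition~(1) thus holds with probability at least $\tfrac12$, uniformly in $t\in\mathbb N$.

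For condition~(2) I would run a first--moment argument conditionally on $\cF_t$. As in the computation of $S(t)$ in Section~2, conditionally on $\cF_t$ the particles of $J_t$ at position $X_t+k$ form a Poisson point process of mean $\lambda_k=K\,\P[\text{a walk at }X_t+k\text{ at time }t\text{ never met the aggregate}]\le K$, and, given their positions, their reversed past trajectories are conditionally independent walks conditioned to have avoided the aggregate on $[0,t]$. Say $j\in J_t$ is \emph{bad} if $\zeta_j(t-s)\le X_t-y_\alpha(s)$ for some $s\ge0$, so that condition~(2) asserts that no particle of $J_t$ is bad. By thinning, the number of bad particles is dominated by a Poisson variable of mean $\sum_{k\ge1}\lambda_k q_k$, where $q_k$ is the conditional probability that such a conditioned walk is bad, and $\lambda_k q_k=K\,\P[\{\text{avoid aggregate on }[0,t]\}\cap\{\text{bad}\}]\le K\,\P[\text{bad}]$ for a rate--one walk conditioned to be at $X_t+k$ at time $t$. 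Reversing time so that this walk sits at $X_t+k-W_s$ at time $t-s$ with $W$ a rate--one walk from $0$, the event $\{\text{bad}\}$ becomes $\{\exists\,s\ge0:\ W_s\ge k+y_\alpha(s)\}$, which no longer depends on $\cF_t$. Since $\sum_{k\ge1}\P[\sup_{s\ge0}(W_s-y_\alpha(s))\ge k]\le\E[(\sup_{s\ge0}(W_s-y_\alpha(s)))^+]$, the mean of the dominating Poisson is at most $C_0(K):=K\,\E[(\sup_{s\ge0}(W_s-y_\alpha(s)))^+]$, so $\P[\text{condition (2)}\mid\cF_t]\ge e^{-C_0(K)}$ whatever $\cF_t$ is.

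It remains to check $C_0(K)<\infty$, which I would obtain by splitting the supremum over the three pieces of $y_\alpha$: on $[0,\alpha^{-3/2}]$ one has $y_\alpha\equiv0$ and $\E[\sup_{s\le\alpha^{-3/2}}W_s]<\infty$ by the reflection principle; on the range where $y_\alpha$ equals the linear function $\alpha(s-\alpha^{-3/2})$, the exponential--martingale estimate used in the proof of Lemma~\ref{l:RWYalpha} gives $\sup_s(W_s-y_\alpha(s))$ an exponential tail; and on the range where $y_\alpha(s)=s^{1/2}\log_2 s$, applying \eqref{e:reflectionBound} dyadically gives $\sum_i\P[\sup_{s\in[2^i,2^{i+1})}W_s\ge y_\alpha(2^i)]\le\sum_i e^{1-i/\sqrt2}<\infty$, and likewise for the shifted tails. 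As these bounds depend only on $\alpha=\alpha(K)$, indeed $C_0(K)<\infty$. Since condition~(1) is $\cF_t$--measurable with probability at least $\tfrac12$ while $\P[\text{condition (2)}\mid\cF_t]\ge e^{-C_0(K)}$ always, we conclude $\inf_{t\in\mathbb N}\P[t\in\fR]\ge \tfrac12 e^{-C_0(K)}=:\delta(K)>0$.

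The one genuinely delicate input here is the conditional (Poisson--domination) description of the surviving particle cloud together with its entire past, given the full aggregate history $\cF_t$ — that conditionally the particles at $X_t+k$ are Poisson$(\lambda_k)$ with conditionally independent trajectories distributed as walks avoiding the aggregate; granting that, the rest is reflection--principle bookkeeping, the main quantitative point being the finiteness, uniform in $t$, of the first--moment constant $C_0(K)$.
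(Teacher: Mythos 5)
Your proposal is correct and follows essentially the same route as the paper's proof: condition (1) is secured uniformly in $t$ from the permissiveness statement of Lemma~\ref{l:permissive3} (the paper additionally routes through Lemma~\ref{l:initialYalpha}, but your direct comparison $10i2^{i/2}\ge s^{1/2}\log_2 s\ge y_\alpha(s)$ works), and condition (2) is handled by the same Poisson first-moment bound, i.e.\ the summability over $k$ of $\P[\sup_s(W_s-y_\alpha(s))\ge k]$ via the reflection estimate~\eqref{e:reflectionBound}, exactly the paper's $\sum_\ell w_\ell<\infty$. The only cosmetic difference is that you bound $\P[\hbox{condition (2)}\mid\cF_t]\ge e^{-C_0(K)}$ uniformly by dominating the conditional intensity by $K$, whereas the paper reduces to $t=0$ by a monotonicity-of-density remark; both versions rest on the same conditional-Poisson description of the unabsorbed particles that the paper already invokes when defining $S(t)$.
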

\begin{proof}
Let $\sD_t$ be the event that $\inf_s Y_t(s) - y_{\alpha}(s) = 0$.
Provided that $\alpha(K)$ is small enough by Lemmas~\ref{l:initialYalpha} and~\ref{l:permissive3} we have that
\[
\P[\sD_t] \geq \frac13.
\]
As the density of particles to the right of $X_t$ is increasing in $Y_t$ it is, therefore greatest when $t=0$ and so $\P[t\in \fR \mid \sD_t]$ is minimized at $t=0$.  Let $w_\ell$ be defined as the probability
\[
w_\ell= \P[\max_{0\leq s \leq t} W_s - y_\alpha(s) > \ell]
\]
For $0\leq \ell < \alpha^{-4}$ we simply bound $w_\ell \leq 1$ so let us consider $\ell \geq \alpha^{-4}$.  Then
\begin{align*}
w_\ell &\leq  1 - \P[M_{\ell}\leq \ell, \forall i \geq \lfloor \log_2 (\ell) \rfloor : \ M_{2^{i+1}} \leq \ell + i 2^{i+1}]\\
&\leq 1 - \P[M_{\ell}\leq \ell]\prod_{i \geq \lfloor \log_2 (\ell) \rfloor} \P[M_{2^{i+1}} \leq \ell + i 2^{i+1}]\\
&\leq 1 - (1 - e^{1-\ell^{1/2}})\prod_{i \geq \lfloor \log_2 (\ell) \rfloor} (1-e^{1-i/\sqrt{2}})\\
&\leq e^{1-\ell^{1/2}} + \sum_{i \geq \lfloor \log_2 (\ell) \rfloor} e^{1-i/\sqrt{2}}
\end{align*}
where the third inequality is be the FKG inequality and the final inequality is by equation~\eqref{e:reflectionBound}.
Then we have that
\begin{align*}
\sum_{\ell \geq \alpha^{-4}} w_\ell &\leq \sum_{\ell \geq \alpha^{-4}} e^{1-\ell^{1/2}} + \sum_{\ell \geq \alpha^{-4}} \sum_{i \geq \lfloor \log_2 (\ell) \rfloor} e^{1-i/\sqrt{2}}\\
& \leq  \sum_{\ell \geq \alpha^{-4}} e^{1-\ell^{1/2}} + \sum_{i} 2^{i+1} e^{1-i/\sqrt{2}} < \infty,
\end{align*}
since $2 e^{-1/\sqrt{2}}<1$.  Hence $\sum_{\ell =0}^\infty w_\ell < \infty$ and so
\[
\P[0\in \fR \mid \sD_0] = \P[\hbox{Poisson}(K\sum_{\ell =0}^\infty w_\ell)=0]>0.
\]
Thus there exists $\delta>0$ such that $\inf_{t\in \mathbb{N}} \P[t\in \fR] \geq \delta$.
\end{proof}

We can now establish our main result.

\begin{proof}[Theorem~\ref{t:mainThm}]
By Lemma~\ref{l:regen} there is a constant density of regeneration times so the expected inter-arrival time is finite.  By Theorem~\ref{t:convProb} the process $X_t$ travels at speed $\alpha^*$, at least in probability.  By the Strong Law of Large Numbers for renewal-reward processes this convergence must also be almost sure.
\end{proof}

\section{Higher dimensions}\label{s:Higher}

Our approach gives a simple way of proving positive speed in higher dimensions as well although not down to the critical threshold.  Simulations for small $K$ in two dimensions produce pictures which look very similar to the classical DLA model.  Surprisingly, however, Eldan~\cite{Eldan:16} conjectured that the critical value for $d\geq 2$ is 0!  That is to say that despite the simulations there is linear growth in of the aggregate for all densities of particles and that these simulations are just a transitory effect reflecting that we are not looking at large enough times.  We are inclined to agree but our techniques will only apply for larger values of $K$.  A better understanding of the notoriously difficult classical DLA model may be necessary, for instance that the aggregate has dimension smaller than 2.

Let us now assume that $K > 1$.  In the setting of $\Z^d$ it will be convenient for the sake of notation to assume that the particles perform simple random walks with rate $d$ which simply speeds the process be a factor of $d$.  The projection of the particles in each co-ordinate is then a rate 1 walk.  We let $U_t$ be the location of the rightmost particle in the aggregate (if there are multiple rightmost particles take the first one) at time $t$ and let $X_t$ denote its first coordinate.  We then define $Y_t(s)$ according to~\eqref{e:yDefn} as before.  We call a particle with path $\big(Z_1(t),\ldots,Z_d(t)\big)$  \emph{conforming} at time $t$ if $Z_1(s) > X_s$ for all $s\leq t$. By construction conforming particles cannot be part of the aggregate and conditional on $X_t$ form a Poisson process with intensity depending only on the first coordinate.

Let $e_i$ denote the unit vector in coordinate $i$.  The intensity of conforming particles at time $t$ at $U_t + e_1$ is then simply
\[
K \P[\max_{0\leq s \leq t} W_s - Y_t(s) \leq 0\mid Y_t].
\]
where $W_s$ is an independent simple random walk.  Similarly the rate at which conforming particles move from $U_t + e_1$ to $Y_t$  thus forming a new rightmost particle is
\[
S(t) = \frac12 K \P[\max_{0\leq s \leq t} W_s - Y_t(s) \leq 0\mid Y_t],
\]
the same as the formula we found in the one dimensional case.  Of course by restricting to conforming particles we are restricting ourselves and so the rate at which $X_t$ increments is strictly larger than $S(t)$.  Since $S(t)$ is increasing as a function of $X_t$ (through $Y_t$) we can stochastically dominate the one dimensional  case by the higher dimensional process which establishes Corollary~\ref{c:Higher}.

Let us now briefly describe how to improve upon $K=1$.  In the argument above we are being wasteful in two regards, first by only considering conforming particles and secondly by considering only a single rightmost particle.  If there are two rightmost particles then the rate at which $X_t$ increases doubles.  The simplest way to get such a new particle is for a conforming particle at $U_t +e_1 \pm e_i$ to jump first to $U_t  \pm e_i$ and then to $U_t$. There are $(2d-2)$ such location and the first move occurs at rate $S(t)$ and the second at has probability $1/(2d)$ to move in the correct direction and takes time exponential with rate $d$.  After this sequence of events the rate at which $X_t$ increments becomes $2S(t)$.

In Lemma~\ref{l:minSpeed}, on which the whole proof effectively rests, we show that for $\rho<1$ if $S(s) \geq \gamma$ for $s\in [t,t+\Delta]$ then with exponentially high probability $X_{t+\Delta} - X_t \geq \rho \Delta \gamma$ for any $\rho < 1$  which is intuitively obvious since $X_t$ grows at rate $S(s) \geq \gamma$.  We can improve our lower bound on $K$ by increasing the range of $\rho$ for which this holds for small values of $\gamma$.

Define the following  independent random variables
\[
V_1\sim \hbox{Exp}(\gamma),V_2\sim \hbox{Exp}(\frac{2d-2}{2d}\gamma), V_3\sim \hbox{Exp}(d), V_4\sim \hbox{Exp}(\gamma)
\]
where we interpret $V_1$ as the time until the first conforming particle hits $U_t$.  We will view $V_2$ as the waiting time for a conforming particle to move from $U_t + \pm e_i + e_1$ to  $U_t \pm e_i$ for some $2\leq i \leq d$ and we further specify that their next step will move directly to $U_t$ which thins the process by a factor $\frac1{2d}$.
Let $V_3$ be the time until its next move. On the event $V_2+V_3 < V_1$ there is an additional rightmost particle before one has been added to the right of $U_t$.  Now let $V_4$ be the first time a conforming particle reaches this new rightmost site.  So the time for $X_t$ to increase is stochastically dominated by
\[
T= \min\{V_1 + V_2+V_3 + V_4 \}.
\]
Now using the memoryless property of exponential random variables,
\begin{align*}
\E T  &= \E V_1 - \E\left[(V_1 - (V_2+V_3 + V_4))I(V_1 \geq V_2+V_3 + V_4)\right]  = \frac1{\gamma}(1-\P[V_1 \geq V_2+V_3 + V_4])
\end{align*}
and
\begin{align*}
\P[V_1 \geq V_2+V_3 + V_4] &= \P[V_1 \geq V_2]\P[V_1 \geq V_2+V_3 \mid V_1 \geq V_2] \P[V_1 \geq V_2+V_3 + V_3 \mid V_1 \geq V_2+V_3]\\
&= \frac{\frac{2d-2}{2d}\gamma}{\gamma + \frac{2d-2}{2d}\gamma} \frac{d}{\gamma+d}\frac{\gamma}{2\gamma}\\
&= \frac{d-1}{2(2d-1)} \frac{d}{\gamma+d}\\
\end{align*}
In the proof we need only to consider the case where $\gamma$ is close to 0 and
\[
\lim_{\gamma\to 0} \gamma \E T = \frac{3d-1}{4d-2}.
\]
Having $X_t$ growing at rate $\gamma\frac{4d-2}{3d-1}$ corresponds in the proof to linear growth provided that $K > \frac{3d-1}{4d-2}$.  In the case for $d=2$ this means $K > \frac{5}{6}$.  We are still being wasteful in several ways and  expect that a more careful analysis would yield better bounds that tend to 0 as $d\to \infty$.  However, we don't believe that this approach alone is sufficient to show that the critical value of $K$ is 0 when $d\geq 2$.  For that more insight into the local structure is likely needed along with connections to standard DLA.

\section{Open Problems}

In the one dimensional case the most natural open questions concern the behaviour of $X_t$ for densities close to 1.  Approaching $K=1$ from above one can ask what exponent does the speed of the process satisfy.  Perhaps of most interest is what is the exponent of growth for $X_t$ when $K=1$.  Heuristics suggest that it may grow as $t^{2/3}$.

In higher dimensions the main open problem is to establish Eldan's conjecture of linear growth for all $K$.  Another natural question is to prove a shape theorem for the aggregate.

{\bf Acknowledgements:}  The author would like to that Vladas Sidoravicius for useful discussions and for his comments on a draft of the paper and to NYU Shanghai for their hospitality.

\bibliographystyle{plain}
\bibliography{DlaBib}
\end{document}